\keywords{test element, Turner group, co-Hopfian}
\theoremstyle{plain} 
\begin{document}

\title[A note on co-Hopfian groups and rings]{A note on co-Hopfian groups and rings}

\author[A.~M.~Gaglione]{Anthony M. Gaglione}	
\address{03 Elmwood Court, Arnold, MD 21012 U.S.A.}	
\email{agaglione@aol.com}  
\thanks{\textit{2020 Mathematics Subject Classification.} Primary 20E26,03C07; Secondary 20F19,20F05.}

\author[D.~Spellman]{Dennis Spellman}	
\address{1134 Haverford Road, Apt. A,
Crum Lynne, PA19022 U.S. A.}	
\email{dennisspellman1@aol.com}  

\begin{abstract}
  \noindent Let $p$ and $n$ be positive integers. Assume additionally that $p\neq 3$ is a prime and that $n>2$. Let $R$ be a field of characteristic $p$. A very special consequence of a result of Bunina and Kunyavskii (2023, arXiv:2308.10076) is that $SL_{n}(R)$ is co-Hopfian as a group if and only if $R$ is co-Hopfian as a ring. In this paper, we prove that if $k$ is the algebraic closure of the $2$ element field, then $SL_{2}(k)$ is a co-Hopfian group. Since this $k$ is trivially seen to be co-Hopfian as a ring our result somewhat extends that of Bunina and Kunyavskii. We apply our result to prove that the class of groups satisfying Turner's Retract Theorem (called Turner groups here) is not closed under elementary equivalence thereby answering a question posed by the authors in (2017, Comm. Algebra).
\end{abstract}

\maketitle

\hfill{\small \it In memory of Ben Fine}

\section*{Introduction}\label{S:one}

According to the account in \cite{MKS}, in 1932 Heinz Hopf
posed the question of whether or not a finitely generated group could be
isomorphic to a proper homomorphic image. Hence a group is \emph{Hopfian}
if it is not isomorphic to a proper homomorphic image. Of course every
finite group is Hopfian; so this finiteness property is of interest for
infinite groups. As the reader is no doubt well aware efforts to answer this
question were successful and there is a vast literature treating both
Hopfian and non-Hopfian groups. Clearly the Hopf property can be stated in a
universal algebraic context and it makes sense to speak, for example, of
Hopfian and non-Hopfian rings. For us a ring is an associative $\mathbb{Z}$%
-algebra with multiplicative identity $1\neq 0$. Subrings are required to
contain $1$ and homomorphisms are required to preserve $1$. More recently
the dual property has generated some interest. An algebra of some fixed type
(for us group or ring) is \emph{co-Hopfian} provided it is not isomorphic
to a proper subalgebra. Of course every finite algebra is co-Hopfian; so,
this finiteness property \ is of interest for infinite algebras. We note
that equivalent formulations of both Hopficity and co-Hopficity can be
stated interns of endomorphisms. Namely, an algebra is Hopfian if and only
if every surjective endomorphism is an automorphism and an algebra is
co-Hopfian if and only if every injective endomorphism is an automorphism.

Enough pandering! We fix notation. If $p$ is a prime $\overline{\mathbb{F}%
_{p}}$ shall be the algebraic closure of the $p$ element field. For the
remainder of this paper $k$ shall be $\overline{\mathbb{F}_{2}}$ and $G$,
standing alone, shall always be the group $SL_{2}(k)$.

\section{Examples}

We first note that if $p$ is a prime then $\overline{\mathbb{F}_{p}}$ is
co-Hopfian as a ring. Of course, since fields are simple, every endomorphism
is injective. Assume to deduce a contradiction that the endomorphism $%
\varphi :~\overline{\mathbb{F}_{p}}\rightarrow \overline{\mathbb{F}_{p}}$ is
not surjective. Suppose the image of $\varphi $ omits the element $\theta $
of $\overline{\mathbb{F}_{p}}$ . Let $f=Irr(\mathbb{F}_{p},\theta )$ be the
minimum polynomial of $\theta $ over the $p$ element field $\mathbb{F}_{p}.$
Suppose $f$ has degree $n$ and that the $n$ roots of $f$ in $\overline{%
\mathbb{F}_{p}}$ are $\theta =\theta _{1},\theta _{2},...,\theta _{n}.$
Since $\varphi $ is injective it must permute the $\theta _{i}$, so there
must be an $i,~1\leq i\leq n$, such that $\varphi (\theta _{i})=\theta $ -
contrary to hypothesis. The contradiction shows every endomorphism is
surjective; hence, as claimed, $\overline{\mathbb{F}_{p}}$ is co-Hopfian.

A similar argument shows that the multiplicative group $\overrightarrow{%
\mathbb{F}_{p}}=\overline{\mathbb{F}_{p}}\backslash \{0\}$ of $\overline{%
\mathbb{F}_{p}}$ is co-Hopfian. To see that observe that $\overrightarrow{%
\mathbb{F}_{p}}$ is locally cyclic being the direct union of the family $%
\mathbb{F}_{p^{n}}^{\ast }$ of subgroups cyclic of order $p^{n}-1$ as $n$
varies over the positive integers. Let $\phi $ be the Euler totient and,
with $N=\phi (p^{n}-1)$, let $\{\theta _{1},...,\theta _{N}\}$ be the $N$
elements of order $p^{n}-1$ in $\mathbb{F}_{p^{n}}^{\ast }$. These are
precisely the $N$ roots of the cyclotomic polynomial of degree $N$ over $%
\mathbb{F}_{P}$. Every injective homomorphism of $\overrightarrow{\mathbb{F}%
_{p}}$ must permute these. Since (taken over all $n$) these generate $%
\overrightarrow{\mathbb{F}_{p}}$ , every injective homomorphism of $%
\overrightarrow{\mathbb{F}_{p}}$ is surjective and as claimed $%
\overrightarrow{\mathbb{F}_{p}}$ is co-Hopfian as a group.

Ol'shanskii has constructed examples of groups $\Gamma $ with the following
properties.

\begin{enumerate}\item 
 $\Gamma $ is infinite
\item 
 $\Gamma $ is nonabelian
\item 
 $\Gamma \,\ $is $2$-generator
\item 
 Every proper subgroup of $\Gamma $ is cyclic.
\end{enumerate}

Let us call a group satisfying 1,2,3,4 above an \emph{Ol'shanskii group}.
Clearly no Ol'shanskii group can be isomorphic to a proper subgroup; so
evert Ol'shanskii group is co-Hopfian.

In the course of proving a rigidity result Bunina and Kunyavskii established
in \cite{BK} a preliminary proposition on Chevalley group $G(\Phi ,R)$ over local
rings $R$. Here $\Phi $ is a reduced irreducible root system of rank at
least $2$.

\begin{prop}[\cite{BK}] With the conventions and notation above and with the hypotheses
that for $\Phi =A_{2},B_{l},C_{l}.\mathbb{F}_{4}$, $2$ is a unit in $R$ and
for $\Phi =G_{2}$, $3$ is a unit in $R$, it is the case that $G(\Phi ,R)$ is
co-Hopfian as a group if and only if $R$ is co-Hopfian as a ring.
\end{prop}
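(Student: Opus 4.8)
The plan is to reduce the equivalence to the \emph{standard description of injective abstract endomorphisms} of a Chevalley group over a commutative local ring. Under precisely the hypotheses of the statement---$\Phi$ reduced irreducible of rank at least $2$, together with $2$ (resp.\ $3$) a unit in $R$ in the cases $A_2,B_l,C_l,F_4$ (resp.\ $G_2$)---the well-developed theory of abstract homomorphisms of Chevalley groups over rings (in the spirit of Borel--Tits and its ring-theoretic refinements) delivers the following: every injective endomorphism $\psi$ of $G(\Phi,R)$ factors as $\psi=\theta\circ G(\rho)$, where $\theta$ is an automorphism of $G(\Phi,R)$ (a product of inner, graph, and central automorphisms) and $G(\rho)\colon G(\Phi,R)\to G(\Phi,R)$ is the map induced by functoriality from an \emph{injective} ring endomorphism $\rho$ of $R$. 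The rank and unit restrictions are exactly what exclude the rank-one and small-characteristic anomalies that would otherwise wreck this description; granting it, the proposition is almost immediate.

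For the first implication, assume $R$ is co-Hopfian as a ring and let $\psi$ be an injective endomorphism of $G(\Phi,R)$, written $\psi=\theta\circ G(\rho)$ as above. Since $\rho$ is an injective endomorphism of $R$ and $R$ is co-Hopfian, $\rho$ is an automorphism of $R$; hence $G(\rho)$ is an automorphism of $G(\Phi,R)$, and therefore so is $\psi=\theta\circ G(\rho)$. Thus every injective endomorphism of $G(\Phi,R)$ is an automorphism, i.e.\ $G(\Phi,R)$ is co-Hopfian as a group.

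For the converse, assume $G(\Phi,R)$ is co-Hopfian and let $\rho\colon R\to R$ be an injective ring endomorphism. Functoriality of the Chevalley--Demazure construction produces a group endomorphism $G(\rho)$ of $G(\Phi,R)$ which, in the matrix realization of $G(\Phi,R)$, acts by applying $\rho$ to each entry; since $\rho$ is injective so is $G(\rho)$. As $G(\Phi,R)$ is co-Hopfian, $G(\rho)$ is surjective. Fixing a root $\alpha$ and a matrix coordinate $c$ with $c(x_\alpha(t))=t$ for all $t\in R$, the relation $c\circ G(\rho)=\rho\circ c$ forces every $t\in R$ to lie in the image of $\rho$. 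Hence $\rho$ is onto, so an automorphism of $R$, and $R$ is co-Hopfian as a ring.

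The substance of the argument, and its only real obstacle, is the structural input of the first paragraph: proving that an injective---not a priori surjective---endomorphism of $G(\Phi,R)$ already has the stated form. This amounts to running the machinery for abstract homomorphisms of Chevalley groups without the usual surjectivity assumption: one must show that, after composing with suitable inner, graph, and central automorphisms, $\psi$ carries each root subgroup into the corresponding root subgroup, extract a well-defined ring endomorphism $\rho$ of $R$ from the action of $\psi$ on a single root coordinate, verify that $\rho$ is injective and that $\psi$ then coincides with $G(\rho)$ up to the adjustments already made, and keep careful track of the center when passing between the simply connected and adjoint forms. Once this is in hand, everything above is routine, and it is exactly for this structural step that the hypotheses on the rank and on the invertibility of $2$ or $3$ are needed.
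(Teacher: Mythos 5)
This proposition is not proved in the paper at all: it is quoted verbatim from Bunina--Kunyavskii \cite{BK}, so there is no internal argument to compare yours with, and your attempt has to stand on its own. Within it, the easy direction is fine: an injective ring endomorphism $\rho$ induces an injective entrywise endomorphism $G(\rho)$, co-Hopficity of the group forces surjectivity, and reading off the root coordinate of $x_\alpha(t)$ shows every $t$ lies in $\rho(R)$; that part is correct and is indeed routine.

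The genuine gap is the first paragraph, which carries the entire weight of the hard direction. You assert that every \emph{injective} abstract endomorphism of $G(\Phi,R)$ over a local ring factors as (automorphism)$\circ G(\rho)$ with $\rho$ an injective ring endomorphism, and you present this as something ``the well-developed theory \ldots delivers.'' It does not, in the form you need: Borel--Tits and its classical refinements describe homomorphisms of groups of points over \emph{fields} (and typically isomorphisms or maps with dense image), while the local-ring results of Bunina and others describe \emph{isomorphisms and automorphisms}, i.e.\ they assume surjectivity from the outset. Establishing the standard factorization for injective, not-necessarily-surjective endomorphisms under exactly the stated rank and invertibility hypotheses is precisely the content of the Bunina--Kunyavskii proposition being proved, as your own closing paragraph concedes. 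So what you have written is a correct reduction of the statement to its hard structural core, plus a proof of the elementary converse, but not a proof: the decisive step is assumed, and it cannot be discharged by citation to results predating \cite{BK}. To make this honest you should either cite \cite{BK} for the factorization (at which point the proposition itself is what you are citing) or actually carry out the endomorphism analysis (root subgroup rigidity, extraction of $\rho$, control of the center) without a surjectivity hypothesis.
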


Recalling that $k=\overline{\mathbb{F}_{2}}$, we thank the referee for
pointing out to us that, for $n>2$, it follows from Proposition 1 that $%
SL_{n}(k)$ is co-Hopfian.

In the next section, we shall prove that $G=SL_{2}(k)$ is co-Hopfian.

\section{The Co-Hopficity of G}

We first observe that, since $k$ is algebraically closed, the Frobenious map 
$\sigma \colon k\rightarrow k,~x\mapsto x^{2}$ is surjective$.$ Put another way,
every element of $k$ has a unique square root.

\begin{nota}
If $%
\begin{bmatrix}
a & b \\ 
c & d%
\end{bmatrix}%
\in GL_{2}(k),then\det 
\begin{bmatrix}
a & b \\ 
c & d%
\end{bmatrix}%
$ shall be its determinant $ad+bc$.
\end{nota}

We next observe that if $X\in GL_{2}(k),$then conjugation by $Y=\frac{1}{%
\sqrt{\det (X)}}\cdot X$ has the same effect as conjugation by $X;$
moreover, $Y$ lies in $G$.

It follows from this that two elements of $G$ conjugate in $GL_{2}(k)$ must
already be conjugate in $G$.

Henceforth we denote the multiplicative and additive groups of $k$ by $%
k^{\ast }$ and $k^{+}$respectively.

Every element of $GL_{2}(k)$ is conjugate to one of the following Jordan
canonical forms, unique up to the order of the blocks:%
\begin{equation*}
\begin{bmatrix}
1 & 0 \\ 
0 & 1%
\end{bmatrix}%
,~%
\begin{bmatrix}
\lambda & 1 \\ 
0 & \lambda%
\end{bmatrix}%
,~\lambda \in k^{\ast },~%
\begin{bmatrix}
\lambda & 0 \\ 
0 & \mu%
\end{bmatrix}%
,~\lambda ,\mu \in (k^{\ast })^{2}\backslash \{(1,1)\}.
\end{equation*}%
It follows that every element of $G$ is conjugate in $G$ to one of the
following Jordan canonical forms, unique up to the order of the blocks:%
\begin{equation*}
\begin{bmatrix}
1 & 0 \\ 
0 & 1%
\end{bmatrix}%
,~%
\begin{bmatrix}
1 & 1 \\ 
0 & 1%
\end{bmatrix}%
,~\lambda \in k^{\ast },~%
\begin{bmatrix}
\lambda & 0 \\ 
0 & \lambda ^{-1}%
\end{bmatrix}%
,~\lambda \in k^{\ast }\backslash \{1\}.
\end{equation*}

\begin{rem}
\begin{enumerate}\item  $\lambda =\lambda ^{-1}$ if and only if $\lambda ^{2}=1$ if and only
if $\lambda =1$.

\item Since each of taking transposes and taking inverses is an
automorphism of $G$, their composition, in either order %
\begin{equation*}
\begin{bmatrix}
a & b \\ 
c & d%
\end{bmatrix}%
\mapsto 
\begin{bmatrix}
d & c \\ 
b & a%
\end{bmatrix}%
\end{equation*}%
is an automorphism of $G$.

\item  The element $%
\begin{bmatrix}
0 & 1 \\ 
1 & 0%
\end{bmatrix}%
$ of $G$ has order $2$ and the inner automorphism
determined by this element is the inverse transpose.

\item Thus, if $\lambda \in $ $k^{\ast }\backslash \{1\}$, 
then $\lambda \neq \lambda ^{-1}$ and the matrices $%
\begin{bmatrix}
\lambda & 0 \\ 
0 & \lambda ^{-1}%
\end{bmatrix}%
$ and $%
\begin{bmatrix}
\lambda ^{-1} & 0 \\ 
0 & \lambda%
\end{bmatrix}%
$ are conjugate in $G$.
\end{enumerate}

Note that the element $%
\begin{bmatrix}
1 & 1 \\ 
0 & 1%
\end{bmatrix}%
$ of $G$ has order $2$ while, if $\lambda \in k^{\ast }\backslash \{1\}$,
then the order of $%
\begin{bmatrix}
\lambda & 0 \\ 
0 & \lambda ^{-1}%
\end{bmatrix}%
\in G$ is the order of $\lambda $ in $k^{\ast }$ and that, being a divisor
of $2^{n}-1$ for some $n$, is odd.

Note also that, for all $\lambda \in k^{\ast }\backslash \{1\},~\lambda
+\lambda ^{-1}=0$ if and only if $\lambda =\lambda ^{-1}$ if and only if $%
\lambda =1$. It follows that every element of $G$ either has odd order or
has order $2$ and, moreover, the elements of order $2$ are precisely the
nontrivial elements of trace $0$.

\end{rem}

\begin{defi}
A group $\Gamma $ which satisfies the universal sentence%
\begin{equation*}
\forall x,y,z~(((y\neq 1)\wedge (xy=yx)\wedge (yz=zy))\rightarrow (xz=zx))
\end{equation*}%
is \emph{commutative transitive}, briefly  \emph{CT}.
\end{defi}

\begin{prop}[\cite{H}] Let $\Gamma $ be a group. The following three statements are pairwise 
equivalent.
\begin{enumerate}\item 
$\Gamma$ is CT

\item  For each $g\in \Gamma \backslash \{1\}$, its
centralizer is abelian.

\item  If $M_{1}$ and $M_{2}$ are maximal abelian
subgroups in $\Gamma$, then $M_{1}\cap M_{2}=\{1\}$ 
unless $M_{1}=M_{2}$.
\end{enumerate}

\end{prop}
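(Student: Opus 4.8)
The plan is to close the cycle of implications $(1)\Rightarrow(2)\Rightarrow(3)\Rightarrow(1)$. For $(1)\Rightarrow(2)$, I would fix a nontrivial $g\in\Gamma$ and, given arbitrary $x,z$ in its centralizer $C_\Gamma(g)$, simply instantiate the displayed CT sentence at $y:=g$: since $xg=gx$, $gz=zg$, and $g\neq 1$, the sentence yields $xz=zx$, so $C_\Gamma(g)$ is abelian.

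For $(2)\Rightarrow(3)$, suppose $M_1,M_2$ are maximal abelian subgroups with $M_1\cap M_2\neq\{1\}$, and pick $g\neq 1$ in the intersection. Each $M_i$ is abelian and contains $g$, hence $M_i\subseteq C_\Gamma(g)$, and $C_\Gamma(g)$ is abelian by hypothesis. Therefore $\langle M_1,M_2\rangle$ is an abelian subgroup containing $M_1$; maximality of $M_1$ forces $\langle M_1,M_2\rangle=M_1$, so $M_2\subseteq M_1$, and then maximality of $M_2$ forces $M_1=M_2$.

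For $(3)\Rightarrow(1)$, I would take $x,y,z$ with $y\neq 1$, $xy=yx$, $yz=zy$; the conclusion $xz=zx$ is immediate unless both $x$ and $z$ are nontrivial, so assume that. The subgroups $\langle x,y\rangle$ and $\langle y,z\rangle$ are abelian, and by Zorn's Lemma each extends to a maximal abelian subgroup, $M_1$ and $M_2$ respectively. Since $y$ is a nontrivial element of $M_1\cap M_2$, hypothesis $(3)$ gives $M_1=M_2$; as $x$ and $z$ both lie in this common abelian subgroup, $xz=zx$, so $\Gamma$ is CT.

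I do not expect a genuine obstacle here, since the statement is essentially folklore. The only step deserving a word of care is the appeal to Zorn's Lemma in $(3)\Rightarrow(1)$: one must observe that the union of a chain of abelian subgroups all containing a fixed abelian subgroup is again an abelian subgroup, so that maximal abelian overgroups exist; and one must not omit the trivial case analysis on whether $x$ or $z$ equals $1$.
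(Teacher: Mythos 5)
Your proof is correct: the cycle $(1)\Rightarrow(2)\Rightarrow(3)\Rightarrow(1)$ is complete, each implication is argued soundly (in particular, in $(2)\Rightarrow(3)$ the subgroup $\langle M_1,M_2\rangle$ is abelian because it sits inside the abelian centralizer $C_\Gamma(g)$, and your Zorn's Lemma remark in $(3)\Rightarrow(1)$ covers the only point needing care). The paper itself gives no proof of this proposition --- it is simply quoted from the cited reference --- so your argument is exactly the standard folklore proof one would expect to find there, and nothing more needs to be said.
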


\begin{rem}
In any CT group,  the maximal abelian subgroups are the centralizers of
nontrivial elements.
\end{rem}

\begin{thm}[\cite{W}] Let $\Gamma $ be a finite nonsolvable group. Then $\Gamma $ is CT
if and only if it is isomorphic to $SL_{2}(\mathbb{F}_{2^{n}})$ for some
integer $n\geq 2.$
\end{thm}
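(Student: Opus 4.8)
The plan is to prove the two implications separately; the substantive one rests on the classical Baer--Kegel--Suzuki classification of the finite groups that admit a nontrivial partition. For the backward implication, let $\Gamma=SL_{2}(\mathbb{F}_{2^{n}})$ with $n\geq 2$. Since $2^{n}\geq 4$, $PSL_{2}(\mathbb{F}_{2^{n}})$ is simple, and in characteristic $2$ one has $SL_{2}=PSL_{2}$, so $\Gamma$ is simple, in particular nonsolvable. To see that $\Gamma$ is CT, note that commutative transitivity is expressed by a universal sentence and hence is inherited by subgroups: as $\mathbb{F}_{2^{n}}\subseteq k$ we have $\Gamma\leq G=SL_{2}(k)$, and the Jordan-form analysis of the next section shows that every nonidentity element of $G$ lies in a conjugate of the upper unitriangular subgroup $\cong k^{+}$ or of the diagonal torus $\cong k^{\ast}$, each of which is its own centraliser in $G$; so $G$, and hence $\Gamma$, satisfies clause (2) of the characterisation of CT groups recalled above and is CT. (Alternatively one argues directly inside $SL_{2}(\mathbb{F}_{2^{n}})$: as that field is perfect the rescaling remark of the next section applies, every nonidentity element is conjugate to one in the unitriangular subgroup, the split torus, or a nonsplit torus, and in each case the centraliser is exactly that abelian subgroup.)

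For the forward implication, let $\Gamma$ be finite, nonsolvable and CT. Being nonsolvable, $\Gamma$ is nonabelian, so it has two noncommuting elements, which lie in two distinct maximal abelian subgroups; since $Z(\Gamma)$ lies in every maximal abelian subgroup while, by clause (3) above, distinct maximal abelian subgroups meet trivially, we get $Z(\Gamma)=1$, and the nonidentity parts of the maximal abelian subgroups partition $\Gamma\setminus\{1\}$. Thus $\Gamma$ carries a nontrivial partition. Furthermore $\Gamma$ has even order, by the Feit--Thompson theorem, and \emph{every} Sylow subgroup of $\Gamma$ is abelian: for a Sylow $p$-subgroup $P$, pick $1\neq t\in Z(P)$, so that $P\leq C_{\Gamma}(t)$, which is abelian since $t\neq 1$.

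Now I would invoke the Baer--Kegel--Suzuki theorem: a finite group with a nontrivial partition is a Frobenius group, a $p$-group of a restricted type, $S_{4}$, $PSL_{2}(q)$ with $q\geq 4$, $PGL_{2}(q)$ with $q$ odd and $q\geq 5$, or a Suzuki group $Sz(q)$ with $q=2^{2m+1}\geq 8$. The $p$-groups and $S_{4}$ are solvable, and a Frobenius group whose Sylow subgroups are all abelian is solvable (its complement then has cyclic Sylow subgroups, hence is metacyclic, and its kernel is nilpotent); so, $\Gamma$ being nonsolvable, it is $PSL_{2}(q)$, $PGL_{2}(q)$ with $q$ odd, or $Sz(q)$. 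In $Sz(q)$ a Sylow $2$-subgroup has order $q^{2}$ and is nonabelian, and in $PGL_{2}(q)$ with $q$ odd it is dihedral of order $\geq 8$; both contradict the previous paragraph. For $PSL_{2}(q)$ with $q$ odd: if $q\equiv\pm 1\pmod 8$ the Sylow $2$-subgroup is dihedral of order $\geq 8$, again impossible; if $q\equiv\pm 3\pmod 8$ it is a four-group, but the centraliser of an involution is dihedral of order $q-\varepsilon$, where $\varepsilon\in\{1,-1\}$ and $q\equiv\varepsilon\pmod 4$, which is nonabelian unless $q-\varepsilon=4$, that is, unless $q=5$ (the value $q=3$ giving the solvable $A_{4}$). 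So $\Gamma$ is $PSL_{2}(2^{n})$ with $n\geq 2$ or $PSL_{2}(5)$; and since the centre of $SL_{2}$ is trivial in characteristic $2$ we have $PSL_{2}(2^{n})=SL_{2}(\mathbb{F}_{2^{n}})$, while $PSL_{2}(5)\cong PSL_{2}(4)=SL_{2}(\mathbb{F}_{2^{2}})$. Hence $\Gamma\cong SL_{2}(\mathbb{F}_{2^{n}})$ for some $n\geq 2$, which finishes the proof.

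The main obstacle, and the place where the real content of the theorem sits, is exactly the appeal to the partition classification; everything else is bookkeeping with the familiar subgroup structure of the rank-one groups of Lie type. Should one wish to avoid that input, the alternative is a Brauer--Suzuki--Wall style analysis of a finite group in which the centraliser of every involution is an abelian trivial-intersection $2$-subgroup, but this essentially reproves the characteristic-$2$ case of the partition theorem, so quoting the latter is the natural course.
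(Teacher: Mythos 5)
The paper does not prove this statement at all: it is quoted as a known classification from \cite{W}, and the paper's only use of it is to deduce (via direct unions) that $G=SL_{2}(k)$ is CT. Your proposal therefore goes well beyond the paper by supplying an actual derivation, and it is essentially sound, but it is a proof by reduction to results at least as deep as the theorem itself: the Baer--Kegel--Suzuki classification of finite groups with a nontrivial partition (whose nonsolvable case is historically Suzuki's work, i.e.\ essentially the same content as the theorem being proved), Thompson's nilpotency of Frobenius kernels, and Feit--Thompson (which, as far as I can see, you never actually use afterwards and could drop). Two remarks on accuracy. First, the partition classification as usually stated has one further class you omit, the groups of Hughes--Thompson type; these are solvable, so your conclusion survives, but as written your case analysis is quoting the classification incompletely and you should either add that class (and dismiss it by solvability) or cite a statement in the form you use. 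Second, your backward direction reverses the paper's logical order: the paper obtains ``$G$ is CT'' as a corollary of this theorem, whereas you obtain CT for the finite groups $SL_{2}(\mathbb{F}_{2^{n}})$ from the centralizer computations in $G$ (Propositions 3 and 5) together with the centralizer characterization of CT; this is not circular, since those propositions are independent of the theorem, and it is in fact a cleaner way to get the easy implication. The remaining bookkeeping --- $Z(\Gamma)=1$, the maximal abelian subgroups partitioning $\Gamma\setminus\{1\}$, abelian Sylow subgroups, and the elimination of $Sz(q)$, $PGL_{2}(q)$ and $PSL_{2}(q)$ for odd $q>5$ via nonabelian Sylow $2$-subgroups or dihedral involution centralizers of order $q-\varepsilon>4$, with $PSL_{2}(5)\cong SL_{2}(\mathbb{F}_{4})$ absorbed into the stated family --- is correct. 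So: a legitimate and informative route, but be explicit that its real content is outsourced to the partition theorem, whereas the paper simply cites \cite{W}.
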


Since $G$ is the direct union $\underset{\longrightarrow }{\lim }~(SL_{2}(%
\mathbb{F}_{2^{n}})$ and universal sentences are preserved in direct unions,
we have the following immediate corollary -

\begin{cor}
$G$ is CT.
\end{cor}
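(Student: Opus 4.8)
The plan is to obtain the Corollary directly from the theorem of \cite{W} (the classification of finite nonsolvable CT groups) together with the elementary model-theoretic fact that a universal first-order sentence is preserved under directed unions: if every member of a directed system of substructures of $A$ satisfies it, then so does $A$. Since the sentence exhibited in the definition of CT has the shape $\forall x,y,z\,\psi$ with $\psi$ quantifier-free, that fact applies to it verbatim.

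I would begin by making the directed union precise. Declaring $m\preceq n$ iff $m\mid n$, the subfields $\mathbb{F}_{2^{n}}$ of $k$ form a directed system under inclusion with $k=\bigcup_{n}\mathbb{F}_{2^{n}}$, and applying $SL_{2}(-)$ turns this into a directed system of subgroups of $G=SL_{2}(k)$ with $G=\bigcup_{n}SL_{2}(\mathbb{F}_{2^{n}})$. Because the integers $n\geq 2$ are cofinal in $(\mathbb{N},\preceq)$, the group $G$ is equally the directed union of the subfamily indexed by $n\geq 2$; this lets me discard the single anomalous term $SL_{2}(\mathbb{F}_{2})\cong S_{3}$ (which is in any case CT, but is solvable and hence outside the scope of \cite{W}).

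Next I would verify the hypothesis of the theorem of \cite{W} for each remaining term: $SL_{2}(\mathbb{F}_{2^{n}})$ is finite of order $2^{n}(2^{n}-1)(2^{n}+1)$, and for $n\geq 2$ it is simple — in characteristic $2$ the centre of $SL_{2}$ is trivial, so $SL_{2}(\mathbb{F}_{2^{n}})=PSL_{2}(\mathbb{F}_{2^{n}})$, and $PSL_{2}(q)$ is simple once $q\geq 4$ — hence nonsolvable. Therefore the theorem of \cite{W} yields that every $SL_{2}(\mathbb{F}_{2^{n}})$ with $n\geq 2$ is CT.

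Finally I would run the preservation argument: given any $g,h,\ell\in G$, directedness supplies an $n\geq 2$ with $g,h,\ell\in SL_{2}(\mathbb{F}_{2^{n}})$; since $\psi$ is quantifier-free and $SL_{2}(\mathbb{F}_{2^{n}})$ is a subgroup of $G$, the validity of $\psi(g,h,\ell)$ in $SL_{2}(\mathbb{F}_{2^{n}})$ passes up to $G$. As $g,h,\ell$ were arbitrary, $G$ satisfies the CT sentence, which is the claim. I do not anticipate any real obstacle; the only points needing a word of care are that the index set is directed by divisibility rather than the natural order, and that one works with $n\geq 2$ so that \cite{W} genuinely applies, while the preservation fact itself is completely standard.
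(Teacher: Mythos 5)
Your proposal is correct and is essentially the paper's own argument: the paper likewise invokes the theorem of \cite{W} to get that each $SL_{2}(\mathbb{F}_{2^{n}})$ is CT and then observes that $G$ is the direct union of these subgroups and that universal sentences are preserved under direct unions. You merely supply details the paper leaves implicit (directedness by divisibility, cofinality of $n\geq 2$, nonsolvability via simplicity), so there is nothing to correct.
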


We make explicit some terminology we shall use going forward.

Let $X=~%
\begin{bmatrix}
x & y \\ 
z & w%
\end{bmatrix}%
\in G$. We call $X$ \emph{diagonal} if it has the form $%
\begin{bmatrix}
x & 0 \\ 
0 & x^{-1}%
\end{bmatrix}%
(x\in k^{\ast })$; \emph{off diagonal} if it has the form $%
\begin{bmatrix}
0 & y \\ 
y^{-1} & 0%
\end{bmatrix}%
(y\in k^{\ast });$ \emph{upper triangular} if it has the form $%
\begin{bmatrix}
x & y \\ 
0 & x^{-1}%
\end{bmatrix}%
(x\in k^{\ast });$ \emph{upper unitriangular} if it has the form $%
\begin{bmatrix}
1 & y \\ 
0 & 1%
\end{bmatrix}%
$; \emph{lower triangular} if it has the form $%
\begin{bmatrix}
x & 0 \\ 
z & x^{-1}%
\end{bmatrix}%
(x\in k^{\ast })$; \emph{lower unitriangular} if it has the form $%
\begin{bmatrix}
1 & 0 \\ 
z & 1%
\end{bmatrix}%
$.

We write $\Delta ,\Delta ^{\prime },U,UT,L$ and $LT$ for the sets of
diagonal, off diagonal, upper triangular, upper unitriangular, lower
triangular and lower unitriangular matrices respectively.

Now let $%
\begin{bmatrix}
s & t \\ 
u & v%
\end{bmatrix}%
\in G$ and $\lambda \in k^{\ast }$. We explicitly compute the following two
conjugations.


\begin{equation}\label{eq:conjugation1}
\begin{bmatrix}
s & t \\ 
u & v%
\end{bmatrix}%
\begin{bmatrix}
\lambda & 0 \\ 
0 & \lambda ^{-1}%
\end{bmatrix}%
\begin{bmatrix}
v & t \\ 
u & s%
\end{bmatrix}%
=%
\begin{bmatrix}
\lambda sv+\lambda ^{-1}tu & (\lambda +\lambda ^{-1})st \\ 
(\lambda +\lambda ^{-1})uv & \lambda ^{-1}sv+\lambda tu%
\end{bmatrix}%
\end{equation}

\begin{equation}
\label{eq:conjugation2}
\begin{bmatrix}
s & t \\ 
u & v%
\end{bmatrix}%
\begin{bmatrix}
1 & \lambda \\ 
0 & 1%
\end{bmatrix}%
\begin{bmatrix}
v & t \\ 
u & s%
\end{bmatrix}%
=%
\begin{bmatrix}
1+\lambda su & \lambda s^{2} \\ 
\lambda u^{2} & 1+\lambda su%
\end{bmatrix}%
\end{equation}

%
%
%

\begin{prop}
Let $\lambda \in k^{\ast }\backslash \{1\}$ and $g=%
\begin{bmatrix}
\lambda & 0 \\ 
0 & \lambda ^{-1}%
\end{bmatrix}%
$. Then $\Delta =C_{G}(g)$.
\end{prop}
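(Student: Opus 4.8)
The plan is to prove the two inclusions $\Delta\subseteq C_G(g)$ and $C_G(g)\subseteq\Delta$ separately; the first is immediate and the second is an unwinding of the explicit conjugation formula \eqref{eq:conjugation1}.

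For $\Delta\subseteq C_G(g)$: any two diagonal matrices in $G$ commute and $g$ itself is diagonal, so every element of $\Delta$ centralizes $g$.

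For the reverse inclusion I would take an arbitrary $h=\begin{bmatrix} s & t \\ u & v \end{bmatrix}\in G$, so that $sv+tu=1$ in our characteristic-$2$ determinant convention and hence $h^{-1}=\begin{bmatrix} v & t \\ u & s \end{bmatrix}$. Then \eqref{eq:conjugation1} computes exactly $hgh^{-1}$, and the condition $hgh^{-1}=g$ becomes the system
\[
(\lambda+\lambda^{-1})st=0,\quad (\lambda+\lambda^{-1})uv=0,\quad \lambda sv+\lambda^{-1}tu=\lambda,\quad \lambda^{-1}sv+\lambda tu=\lambda^{-1}.
\]
Since $\lambda\in k^{\ast}\backslash\{1\}$, the Remark gives $\lambda+\lambda^{-1}\neq 0$, so the first two equations force $st=0$ and $uv=0$.

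From $st=uv=0$ I would run a short case analysis on which entries vanish. The cases $s=u=0$ and $t=v=0$ are impossible, since they make a column of $h$ zero, contradicting $\det h=1$. The anti-diagonal case $s=v=0$ forces $tu=1$, whence the third equation reads $\lambda^{-1}=\lambda$, i.e.\ $\lambda=1$, a contradiction. The only surviving case is $t=u=0$, which together with $\det h=1$ gives $h=\begin{bmatrix} s & 0 \\ 0 & s^{-1} \end{bmatrix}\in\Delta$, and one checks that the two diagonal equations then hold automatically. This yields $C_G(g)\subseteq\Delta$ and completes the proof. The only mildly delicate point is the anti-diagonal case, where one must invoke a diagonal equation together with $\lambda\neq 1$ — the vanishing of the off-diagonal entries alone does not rule it out.
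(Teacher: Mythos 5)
Your proof is correct and follows essentially the same route as the paper: both use the explicit conjugation formula \eqref{eq:conjugation1}, note that $\lambda\neq 1$ forces $\lambda+\lambda^{-1}\neq 0$ and hence $st=uv=0$, and then eliminate the off-diagonal case using $\lambda\neq\lambda^{-1}$, leaving only $t=u=0$. The only difference is cosmetic: the paper phrases the computation as finding all matrices conjugating $g$ \emph{into} $\Delta$ (to reuse it for $N_G(\Delta)$ in the next proposition), while you work directly with $hgh^{-1}=g$.
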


\begin{proof}
We must find $%
\begin{bmatrix}
s & t \\ 
u & v%
\end{bmatrix}%
$ which conjugates $g$ to $g$. With an eye towards also determining $%
N_{G}(\Delta )$, we use the conjugation computation in Eq.~\ref{eq:conjugation1} to find more generally,
the $%
\begin{bmatrix}
s & t \\ 
u & v%
\end{bmatrix}%
$ which conjugate $\ g$ into $\Delta $. Since $\lambda \neq 1$, we have $%
\lambda +\lambda ^{-1}\neq 0$. Then $st=uv=0.$ Since $sv+tu=1$ there are two
possibilities, namely:%
\begin{equation*}
s=v=0~\text{or }t=u=0.
\end{equation*}%
If $s=v=0$, then $%
\begin{bmatrix}
s & t \\ 
u & v%
\end{bmatrix}%
\begin{bmatrix}
\lambda & 0 \\ 
0 & \lambda ^{-1}%
\end{bmatrix}%
\begin{bmatrix}
v & t \\ 
u & s%
\end{bmatrix}%
=%
\begin{bmatrix}
0 & t \\ 
t^{-1} & 0%
\end{bmatrix}%
\begin{bmatrix}
\lambda & 0 \\ 
0 & \lambda ^{-1}%
\end{bmatrix}%
\begin{bmatrix}
0 & t \\ 
t^{-1} & 0%
\end{bmatrix}%
=$ $\ 
\begin{bmatrix}
\lambda ^{-1} & 0 \\ 
0 & \lambda%
\end{bmatrix}%
\neq 
\begin{bmatrix}
\lambda & 0 \\ 
0 & \lambda ^{-1}%
\end{bmatrix}%
$ since $\lambda \neq 1$.

So $t=u=0$ and $%
\begin{bmatrix}
s & t \\ 
u & v%
\end{bmatrix}%
\begin{bmatrix}
\lambda & 0 \\ 
0 & \lambda ^{-1}%
\end{bmatrix}%
\begin{bmatrix}
v & t \\ 
u & s%
\end{bmatrix}%
=%
\begin{bmatrix}
s & 0 \\ 
0 & s^{-1}%
\end{bmatrix}%
\begin{bmatrix}
\lambda & 0 \\ 
0 & \lambda ^{-1}%
\end{bmatrix}%
\begin{bmatrix}
s^{-1} & 0 \\ 
0 & s%
\end{bmatrix}%
=%
\begin{bmatrix}
\lambda & 0 \\ 
0 & \lambda ^{-1}%
\end{bmatrix}%
.$ 
\end{proof}

\begin{cor}
If $g\in G\backslash \{1\}$ has odd order, then $C_{G}(g)$ is isomorphic to $%
k^{\ast }$.
\end{cor}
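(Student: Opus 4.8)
The plan is to reduce everything to the diagonal case, which has already been settled. First I would invoke the conjugacy classification of elements of $G$ recorded above: every $g\in G$ is conjugate in $G$ either to $I$, to $\begin{bmatrix}1&1\\0&1\end{bmatrix}$, or to some $g_{\lambda}=\begin{bmatrix}\lambda&0\\0&\lambda^{-1}\end{bmatrix}$ with $\lambda\in k^{\ast}\backslash\{1\}$. By the Remark following that classification, $\begin{bmatrix}1&1\\0&1\end{bmatrix}$ has order $2$, whereas each $g_{\lambda}$ has the order of $\lambda$ in $k^{\ast}$, which divides $2^{n}-1$ for some $n$ and is therefore odd. Consequently, if $g\neq 1$ has odd order, $g$ cannot be conjugate to the unitriangular representative, so $g=hg_{\lambda}h^{-1}$ for some $h\in G$ and some $\lambda\in k^{\ast}\backslash\{1\}$.

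Next I would use the fact that centralizers of conjugate elements are conjugate: $C_{G}(g)=C_{G}(hg_{\lambda}h^{-1})=h\,C_{G}(g_{\lambda})\,h^{-1}$. Since conjugation by $h$ is an automorphism of $G$, this gives $C_{G}(g)\cong C_{G}(g_{\lambda})$. Now the preceding Proposition (with $\lambda\in k^{\ast}\backslash\{1\}$, whose hypothesis is exactly what the previous paragraph guarantees) tells us $C_{G}(g_{\lambda})=\Delta$, the set of diagonal matrices in $G$.

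Finally, I would observe that $\Delta\cong k^{\ast}$ via the map $\begin{bmatrix}x&0\\0&x^{-1}\end{bmatrix}\mapsto x$, which is immediately seen to be a bijective group homomorphism. Composing the isomorphisms yields $C_{G}(g)\cong\Delta\cong k^{\ast}$, as claimed. There is essentially no obstacle here, since the substantive computation was carried out in the preceding Proposition; the only point that requires a moment's attention is the appeal to the Remark to exclude the order-$2$ unitriangular conjugacy class, which is what ensures an odd-order nontrivial $g$ really does fall into a diagonal class with $\lambda\neq 1$ so that the Proposition is applicable.
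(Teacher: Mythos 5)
Your argument is correct and is precisely the reasoning the paper intends (the corollary is stated there without a written proof): reduce via the Jordan form classification and the order remark to the diagonal case $\lambda\in k^{\ast}\backslash\{1\}$, apply the preceding Proposition, use conjugacy of centralizers, and note $\Delta\cong k^{\ast}$. Nothing is missing.
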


\begin{prop}
$N_{G}(\Delta )$ is generated by $\Delta ^{\prime }$. It is metabelian and
is a semidirect product of $\Delta $ by the cycle of order $2$ generated by $%
\begin{bmatrix}
0 & 1 \\ 
1 & 0%
\end{bmatrix}%
$ .
\end{prop}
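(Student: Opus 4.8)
The statement has two parts: (i) $N_{G}(\Delta)$ is generated by $\Delta'$; and (ii) the structural description as a metabelian semidirect product $\Delta \rtimes \langle w \rangle$ with $w = \begin{bmatrix} 0 & 1 \\ 1 & 0 \end{bmatrix}$ of order $2$. The proof of Proposition~8 (on $C_G(g) = \Delta$) already did most of the work for us: the displayed computation there, using Eq.~\ref{eq:conjugation1}, shows that a matrix $\begin{bmatrix} s & t \\ u & v \end{bmatrix} \in G$ conjugates $g = \begin{bmatrix} \lambda & 0 \\ 0 & \lambda^{-1} \end{bmatrix}$ into $\Delta$ precisely in the two cases $t = u = 0$ (giving $\begin{bmatrix} s & 0 \\ 0 & s^{-1} \end{bmatrix} \in \Delta$) or $s = v = 0$ (giving $\begin{bmatrix} 0 & t \\ t^{-1} & 0 \end{bmatrix} \in \Delta'$). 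So the plan is to first observe that $N_G(\Delta)$ is exactly the set of elements conjugating \emph{some}, equivalently \emph{every}, nontrivial diagonal element into $\Delta$; since $\Delta$ is abelian and, for $\lambda \ne 1$, equals $C_G(g)$ by Proposition~8, any element normalizing $\Delta$ must conjugate $g$ to another element of $\Delta$, and conversely. This pins down $N_G(\Delta) = \Delta \cup \Delta'$ as a set.

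**Key steps.** First I would argue $N_G(\Delta) \subseteq \Delta \cup \Delta'$: pick $g = \begin{bmatrix} \lambda & 0 \\ 0 & \lambda^{-1} \end{bmatrix}$ with $\lambda$ of odd order $>1$ (such $\lambda \in k^\ast$ exists since $k^\ast$ contains elements of every order dividing some $2^n - 1$); if $X \in N_G(\Delta)$ then $XgX^{-1} \in \Delta$, so by the case analysis recalled above (reading off Eq.~\ref{eq:conjugation1} with $st = uv = 0$ forced by $\lambda + \lambda^{-1} \ne 0$), $X \in \Delta \cup \Delta'$. Conversely $\Delta \subseteq N_G(\Delta)$ trivially, and a direct check (or Remark, part 4 / the off-diagonal conjugation in the proof of Prop.~8) shows each element of $\Delta'$ conjugates $\Delta$ to itself, swapping $\begin{bmatrix} \mu & 0 \\ 0 & \mu^{-1} \end{bmatrix} \leftrightarrow \begin{bmatrix} \mu^{-1} & 0 \\ 0 & \mu \end{bmatrix}$. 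So $N_G(\Delta) = \Delta \cup \Delta'$. Next, note $\Delta' = \Delta \cdot w$ where $w = \begin{bmatrix} 0 & 1 \\ 1 & 0 \end{bmatrix}$, since $\begin{bmatrix} d & 0 \\ 0 & d^{-1} \end{bmatrix} \begin{bmatrix} 0 & 1 \\ 1 & 0 \end{bmatrix} = \begin{bmatrix} 0 & d \\ d^{-1} & 0 \end{bmatrix}$; hence $N_G(\Delta) = \Delta \cup \Delta w = \langle \Delta, w \rangle$, and since $\Delta' \ni w$ generates $\Delta$? — here I must be slightly careful: I should verify that $\Delta'$ alone generates the whole of $N_G(\Delta)$, which holds because the product of two off-diagonal matrices $\begin{bmatrix} 0 & t_1 \\ t_1^{-1} & 0 \end{bmatrix}\begin{bmatrix} 0 & t_2 \\ t_2^{-1} & 0 \end{bmatrix} = \begin{bmatrix} t_1 t_2^{-1} & 0 \\ 0 & t_1^{-1} t_2 \end{bmatrix}$ ranges over all of $\Delta$ as $t_1, t_2$ vary. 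Combined with $\Delta' \subseteq N_G(\Delta)$ this gives $\langle \Delta' \rangle = N_G(\Delta)$. Finally, for the structure: $w^2 = I$ so $\langle w \rangle$ is cyclic of order $2$; $\Delta \trianglelefteq N_G(\Delta)$ since $\Delta$ is abelian and index $2$ (or because $w \Delta w^{-1} = \Delta$ by the swap computation); $\Delta \cap \langle w \rangle = \{I\}$ since $w \notin \Delta$; and $\Delta \cdot \langle w \rangle = \Delta \cup \Delta w = N_G(\Delta)$. Hence $N_G(\Delta) = \Delta \rtimes \langle w \rangle$. Metabelian is then immediate: $\Delta$ is abelian and the quotient $\langle w \rangle$ is abelian, so the derived subgroup lies in $\Delta$.

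**Main obstacle.** None of the steps is deep; the one requiring a little care is making sure the case analysis from the proof of Proposition~8 genuinely characterizes \emph{all} of $N_G(\Delta)$ rather than just $C_G(g)$ — i.e., remembering that normalizing $\Delta$ only requires conjugating $g$ \emph{into} $\Delta$, not fixing it, so the ``$s=v=0$'' branch (discarded in Prop.~8 because it moved $g$) is now retained and accounts exactly for $\Delta'$. The other mild point is the claim that $\Delta'$ \emph{generates} $N_G(\Delta)$ (not merely that $\Delta' \cup \Delta$ does), which is handled by the product-of-two-off-diagonals computation above. Everything else is bookkeeping with $2\times 2$ matrices over $k$, using that $k$ has characteristic $2$ so $\det \begin{bmatrix} a & b \\ c & d \end{bmatrix} = ad + bc$ per the Notation, and that every element of $k^\ast$ has odd order.
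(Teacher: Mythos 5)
Your proof is correct and follows essentially the same route as the paper: it reuses the case analysis from the centralizer computation (Eq.~\ref{eq:conjugation1}) to identify $N_{G}(\Delta)=\Delta\cup\Delta'$, then exhibits $\Delta$ inside $\langle\Delta'\rangle$ by an explicit product of off-diagonal matrices, and concludes with the evident semidirect product and metabelian structure. The only cosmetic difference is that the paper writes each diagonal matrix as an off-diagonal matrix times $\begin{bmatrix}0&1\\1&0\end{bmatrix}$ rather than as a product of two off-diagonal matrices, which is the same computation.
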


\begin{proof}
By the proof of Proposition 3, $%
\begin{bmatrix}
s & t \\ 
u & v%
\end{bmatrix}%
$ conjugates $\Delta $ into itself if and only if it lies in either $\Delta $
or $\Delta ^{\prime }$. From the equivalent equations%
\begin{equation*}
\begin{bmatrix}
\lambda & 0 \\ 
0 & \lambda ^{-1}%
\end{bmatrix}%
=%
\begin{bmatrix}
0 & \lambda \\ 
\lambda ^{-1} & 0%
\end{bmatrix}%
\begin{bmatrix}
0 & 1 \\ 
1 & 0%
\end{bmatrix}%
\end{equation*}%
and%
\begin{equation*}
\begin{bmatrix}
0 & \lambda \\ 
\lambda ^{-1} & 0%
\end{bmatrix}%
=%
\begin{bmatrix}
\lambda & 0 \\ 
0 & \lambda ^{-1}%
\end{bmatrix}%
\begin{bmatrix}
0 & 1 \\ 
1 & 0%
\end{bmatrix}%
~(\lambda \in k^{\ast })
\end{equation*}%
we see that $N_{G}(\Delta )$ is generated by $\Delta ^{\prime }$ and that it
is the product of $\Delta \trianglelefteq N_{G}(\Delta )$ and $\left\langle 
\begin{bmatrix}
0 & 1 \\ 
1 & 0%
\end{bmatrix}%
\right\rangle $. Since these intersect in the identity, $N_{G}(\Delta )$ is
their semidirect product. 
\end{proof}

\begin{cor}
If $g\neq 1$ has odd order and $M=C_{G}(g)$, then $N_{G}(M)$ is metabelian
and is a semidirect product of $k^{\ast }$ by a cycle of order $2$.
\end{cor}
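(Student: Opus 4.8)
The plan is to deduce this immediately from Proposition~4 by transporting along a conjugation. First recall, from the Remark following the list of Jordan canonical forms, that a nontrivial element of $G$ has odd order precisely when it is conjugate in $G$ to a matrix $h=\begin{bmatrix}\lambda & 0\\ 0 & \lambda^{-1}\end{bmatrix}$ with $\lambda\in k^{\ast}\setminus\{1\}$: every nontrivial element of $G$ has either order $2$ or odd order, the unipotent class of $\begin{bmatrix}1&1\\0&1\end{bmatrix}$ has order $2$, and among the remaining Jordan forms exactly the $\begin{bmatrix}\lambda & 0\\ 0 & \lambda^{-1}\end{bmatrix}$ with $\lambda\neq 1$ survive, these having odd order. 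So, given $g\neq 1$ of odd order, choose $x\in G$ with $xgx^{-1}=h$ for such an $h$; note $\lambda\neq 1$ automatically since $g\neq 1$, so Propositions~3 and~4 apply verbatim to $h$.

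Now let $\gamma$ be conjugation by $x$, an automorphism of $G$. Since $\gamma(g)=h$, Proposition~3 gives $\gamma(M)=\gamma(C_G(g))=C_G(h)=\Delta$, and hence $\gamma(N_G(M))=N_G(\Delta)$; that is, $\gamma$ restricts to an isomorphism from $N_G(M)$ onto $N_G(\Delta)$. By Proposition~4, $N_G(\Delta)$ is metabelian and is the semidirect product of $\Delta$ by the order-$2$ subgroup generated by $\begin{bmatrix}0&1\\1&0\end{bmatrix}$, while $\Delta\cong k^{\ast}$ via $\begin{bmatrix}\lambda & 0\\ 0 & \lambda^{-1}\end{bmatrix}\mapsto\lambda$ (this is also the content of the corollary to Proposition~3). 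Pulling this decomposition back through $\gamma^{-1}$ exhibits $N_G(M)$ as the semidirect product of $\gamma^{-1}(\Delta)=M\cong k^{\ast}$ by the cyclic group of order $2$ generated by $x^{-1}\begin{bmatrix}0&1\\1&0\end{bmatrix}x$, and metabelianness is visibly preserved under isomorphism; this is exactly the claim.

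I do not anticipate a real obstacle here: all the substance lies in Propositions~3 and~4, and what remains is the routine bookkeeping that conjugate subgroups have conjugate normalizers, together with the observation, already recorded in the Remark above, that the odd-order elements of $G$ are precisely the nontrivial elements conjugate to a diagonal matrix. The only point worth a moment's care is ensuring $\lambda\neq 1$ in the chosen $h$ so that the two propositions apply as stated, which is guaranteed by $g\neq 1$.
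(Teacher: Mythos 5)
Your argument is correct and is exactly the (implicit) argument the paper intends: the corollary is stated as an immediate consequence of Propositions 3 and 4, obtained by conjugating an odd-order $g\neq 1$ to a diagonal matrix with $\lambda\neq 1$ and transporting the decomposition of $N_G(\Delta)$ back through that conjugation. Your write-up simply makes this bookkeeping explicit, so no further comment is needed.
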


\begin{prop}
Let $\lambda \in k^{\ast }$ and $g=%
\begin{bmatrix}
1 & \lambda \\ 
0 & 1%
\end{bmatrix}%
$. Then $UT=C_{G}(g)$.
\end{prop}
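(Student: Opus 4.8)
The plan is to run the argument used for Proposition~3, but with the conjugation formula Eq.~\ref{eq:conjugation2} in place of Eq.~\ref{eq:conjugation1}. First, since $UT$ is abelian --- it is isomorphic to $k^{+}$ --- and $g$ itself lies in $UT$, the inclusion $UT\subseteq C_{G}(g)$ is immediate. For the reverse inclusion I would take an arbitrary $X=\begin{bmatrix} s & t \\ u & v\end{bmatrix}\in G$ that centralizes $g$. Recall that in $SL_{2}$ over a field of characteristic $2$ one has $\begin{bmatrix} s & t \\ u & v\end{bmatrix}^{-1}=\begin{bmatrix} v & t \\ u & s\end{bmatrix}$, since $-1=1$ in $k$; hence the left-hand side of Eq.~\ref{eq:conjugation2} is precisely $XgX^{-1}$.

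Imposing $XgX^{-1}=g$ and comparing the entries of Eq.~\ref{eq:conjugation2} with those of $g$, the $(2,1)$-entry gives $\lambda u^{2}=0$, hence $u=0$ because $\lambda\neq 0$; the $(1,2)$-entry gives $\lambda s^{2}=\lambda$, hence $s^{2}=1$; and the $(1,1)$-entry condition $1+\lambda su=1$ is then automatic. Since $k$ is a field of characteristic $2$, it is reduced, so $s^{2}=1$ forces $(s+1)^{2}=0$ and therefore $s=1$ --- this is again the injectivity of the Frobenius noted at the start of the section. Finally, from $X\in SL_{2}(k)$ we have $sv+tu=1$ (in the sense of the determinant convention fixed above), which together with $s=1$ and $u=0$ yields $v=1$, while $t$ is unconstrained. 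Thus $X=\begin{bmatrix} 1 & t \\ 0 & 1\end{bmatrix}\in UT$, so $C_{G}(g)\subseteq UT$, and combining the two inclusions gives $UT=C_{G}(g)$.

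The argument is essentially mechanical, so there is no serious obstacle; the two points that merit a word of care are the passage from $s^{2}=1$ to $s=1$, which uses that $k$ is a field of characteristic $2$ (hence reduced), and the bookkeeping that Eq.~\ref{eq:conjugation2} is written in terms of $X^{-1}=\begin{bmatrix} v & t \\ u & s\end{bmatrix}$ rather than $X$, which is legitimate exactly because $-1=1$ in $k$.
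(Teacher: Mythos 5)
Your proposal is correct and follows essentially the same route as the paper: it applies the conjugation formula in Eq.~\ref{eq:conjugation2}, reads off $u=0$ from $\lambda u^{2}=0$ and $s=1$ from $\lambda s^{2}=\lambda$ via the injectivity of squaring in characteristic $2$, and concludes $C_{G}(g)=UT$. Your extra remarks (the trivial inclusion $UT\subseteq C_{G}(g)$ and the determinant bookkeeping giving $v=1$) only make explicit what the paper leaves implicit.
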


\begin{proof}
We must find $%
\begin{bmatrix}
s & t \\ 
u & v%
\end{bmatrix}%
$ which conjugates$~g$ to $g$. With an eye towards also determining $%
N_{G}(UT)$, we use the conjugation computation in Eq.~\ref{eq:conjugation2}  to find, more generally, the 
$%
\begin{bmatrix}
s & t \\ 
u & v%
\end{bmatrix}%
$ which conjugate $g$ into $UT$. From that computation we see that this
entails $\lambda u^{2}=0$ and since $\lambda \neq 0$ we see that $u=0$; so%
\begin{equation*}
\begin{bmatrix}
s & t \\ 
u & v%
\end{bmatrix}%
=%
\begin{bmatrix}
s & t \\ 
0 & s^{-1}%
\end{bmatrix}%
\in U.
\end{equation*}%
Again, from 
 Eq.~\ref{eq:conjugation2}, we see that in order that $%
\begin{bmatrix}
s & t \\ 
u & v%
\end{bmatrix}%
=$ $%
\begin{bmatrix}
s & t \\ 
0 & s^{-1}%
\end{bmatrix}%
$ to conjugate $g$ into $g$ we must have $\lambda s^{2}=\lambda $ and since $%
\lambda \neq 0,~s^{2}=1$ and hence $s=1$. Thus, $C_{G}(g)$ consists of the $%
\begin{bmatrix}
1 & t \\ 
0 & 1%
\end{bmatrix}%
$ and so coincides with $UT$. 
\end{proof}

\begin{cor}
By taking the inverse transpose automorphism we see that, if $\lambda \in
k^{\ast }$, and $g=$ $%
\begin{bmatrix}
1 & 0 \\ 
\lambda & 1%
\end{bmatrix}%
$, then $C_{G}(g)=LT$.
\end{cor}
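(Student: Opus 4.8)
The plan is to derive this directly from Proposition 5 by transporting the statement through the inverse–transpose automorphism recorded in Remark 2(2). Write $\tau$ for that automorphism of $G$, namely
\[
\tau\colon \begin{bmatrix} a & b \\ c & d \end{bmatrix}\longmapsto \begin{bmatrix} d & c \\ b & a \end{bmatrix},
\]
which is an automorphism because it is the composition of the transpose and inverse maps, each of which is an automorphism of $G$. The one external fact I would use is that automorphisms carry centralizers to centralizers: $\tau\bigl(C_G(h)\bigr)=C_G\bigl(\tau(h)\bigr)$ for every $h\in G$.

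First I would compute the relevant images under $\tau$. Applying $\tau$ to the upper unitriangular matrix $\begin{bmatrix} 1 & \lambda \\ 0 & 1 \end{bmatrix}$ gives exactly $\begin{bmatrix} 1 & 0 \\ \lambda & 1 \end{bmatrix}=g$, and applying $\tau$ to a typical element $\begin{bmatrix} 1 & t \\ 0 & 1 \end{bmatrix}$ of $UT$ gives $\begin{bmatrix} 1 & 0 \\ t & 1 \end{bmatrix}$; as $t$ ranges over $k^{+}$ this shows $\tau(UT)=LT$.

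Combining these with Proposition 5, I would conclude
\[
C_G(g)=C_G\!\left(\tau\!\begin{bmatrix} 1 & \lambda \\ 0 & 1 \end{bmatrix}\right)=\tau\!\left(C_G\!\begin{bmatrix} 1 & \lambda \\ 0 & 1 \end{bmatrix}\right)=\tau(UT)=LT,
\]
the middle equality being Proposition 5. There is no genuine obstacle here; the only points worth stating explicitly are that $\tau$ is an automorphism of $G$ and that centralizers are preserved by automorphisms, after which the result is a one-line matrix computation.
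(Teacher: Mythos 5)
Your argument is correct and is exactly the route the paper intends: the corollary is deduced by transporting Proposition 5 through the inverse--transpose automorphism of Remark 2(2), using that automorphisms carry centralizers to centralizers and that this automorphism sends $UT$ to $LT$ and the upper unitriangular generator to $g$. Your write-up simply makes explicit the one-line computation the paper leaves implicit.
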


\begin{cor}
If $g\in G\backslash \{1\}$ has order $2$, then $C_{G}(g)$ is isomorphic to $%
k^{+}$.
\end{cor}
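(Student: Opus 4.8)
The plan is to reduce the statement to Proposition 5 by a conjugacy argument, exactly as the Corollary to Proposition 3 reduced the odd‑order case to Proposition 3. First I would recall from the Jordan canonical form discussion above that every element of $G$ of order $2$ is conjugate in $G$ to $\begin{bmatrix} 1 & 1 \\ 0 & 1 \end{bmatrix}$. Indeed, every nontrivial element of $G$ is $G$‑conjugate to one of the listed Jordan types; the diagonal type $\begin{bmatrix} \lambda & 0 \\ 0 & \lambda^{-1} \end{bmatrix}$ with $\lambda\neq 1$ has odd order, so the only type that can have order $2$ is the single class of $\begin{bmatrix} 1 & 1 \\ 0 & 1 \end{bmatrix}$, and that matrix does have order $2$. (Equivalently, one may invoke the observation already recorded that the elements of order $2$ are exactly the nontrivial matrices of trace $0$, each of which has unipotent Jordan form $\begin{bmatrix} 1 & 1 \\ 0 & 1 \end{bmatrix}$.)

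Next I would use the elementary fact that for any $x\in G$ the inner automorphism $y\mapsto xyx^{-1}$ carries $C_{G}(g)$ onto $C_{G}(xgx^{-1})$, so centralizers of conjugate elements are isomorphic. Choosing $x$ with $xgx^{-1}=\begin{bmatrix} 1 & 1 \\ 0 & 1 \end{bmatrix}$ gives $C_{G}(g)\cong C_{G}\!\left(\begin{bmatrix} 1 & 1 \\ 0 & 1 \end{bmatrix}\right)$. Now I would apply Proposition 5 with $\lambda=1$, which yields $C_{G}\!\left(\begin{bmatrix} 1 & 1 \\ 0 & 1 \end{bmatrix}\right)=UT$, the group of upper unitriangular matrices. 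Finally, the assignment $\begin{bmatrix} 1 & y \\ 0 & 1 \end{bmatrix}\mapsto y$ is a bijection from $UT$ onto $k$, and since $\begin{bmatrix} 1 & y \\ 0 & 1 \end{bmatrix}\begin{bmatrix} 1 & y' \\ 0 & 1 \end{bmatrix}=\begin{bmatrix} 1 & y+y' \\ 0 & 1 \end{bmatrix}$ it is an isomorphism of $UT$ onto $k^{+}$. Composing the three isomorphisms gives $C_{G}(g)\cong k^{+}$.

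As for difficulty, there is essentially no obstacle: the corollary is just a repackaging of Proposition 5 together with the conjugacy classification, in complete parallel with the Corollary to Proposition 3. The only point demanding a moment's care is the first step --- confirming that an arbitrary order‑$2$ element really is $G$‑conjugate to the distinguished unitriangular matrix and not to some other Jordan type --- but this is immediate from the list of Jordan forms in $G$ and the accompanying parity remark that the diagonal type has odd order.
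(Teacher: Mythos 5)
Your proposal is correct and is essentially the argument the paper intends: conjugate the order-$2$ element to $\begin{bmatrix}1&1\\0&1\end{bmatrix}$ using the Jordan form discussion (and the fact, noted in the paper, that $GL_2(k)$-conjugacy of elements of $G$ implies $G$-conjugacy), apply Proposition 5 to get centralizer $UT$, and identify $UT$ with $k^{+}$ via $\begin{bmatrix}1&y\\0&1\end{bmatrix}\mapsto y$. The paper states the corollary without a written proof, and your reduction-by-conjugacy is exactly the parallel of the corollary following Proposition 3.
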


\begin{prop}
$N_{G}(UT)=U$. It is metabelian and a semidirect product of $UT$ by $\Delta $%
.
\end{prop}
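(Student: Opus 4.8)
The plan is to follow the pattern already used for $N_G(\Delta)$ in the preceding propositions, leaning on the conjugation formula of Eq.~\ref{eq:conjugation2}. I would first establish the inclusion $N_G(UT)\subseteq U$. Suppose $X=\begin{bmatrix} s & t \\ u & v\end{bmatrix}\in G$ normalizes $UT$. Then, in particular, $X$ conjugates the nontrivial element $g=\begin{bmatrix} 1 & 1 \\ 0 & 1\end{bmatrix}$ of $UT$ back into $UT$. Because $\det X=1$ and $\mathrm{char}\,k=2$, the inverse of $X$ in $G$ is $\begin{bmatrix} v & t \\ u & s\end{bmatrix}$, so Eq.~\ref{eq:conjugation2} (with $\lambda=1$) gives
\[
XgX^{-1}=\begin{bmatrix} 1+su & s^{2} \\ u^{2} & 1+su\end{bmatrix}.
\]
For this matrix to lie in $UT$ its lower-left entry $u^{2}$ must vanish, and since $k$ is a field this forces $u=0$; combined with $\det X=1$ this says $X=\begin{bmatrix} s & t \\ 0 & s^{-1}\end{bmatrix}\in U$. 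Hence $N_G(UT)\subseteq U$.

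For the reverse inclusion I would show directly that $UT\trianglelefteq U$: for $\begin{bmatrix} x & y \\ 0 & x^{-1}\end{bmatrix}\in U$ (whose inverse is $\begin{bmatrix} x^{-1} & y \\ 0 & x\end{bmatrix}$) and $\begin{bmatrix} 1 & z \\ 0 & 1\end{bmatrix}\in UT$ one computes
\[
\begin{bmatrix} x & y \\ 0 & x^{-1}\end{bmatrix}\begin{bmatrix} 1 & z \\ 0 & 1\end{bmatrix}\begin{bmatrix} x^{-1} & y \\ 0 & x\end{bmatrix}=\begin{bmatrix} 1 & x^{2}z \\ 0 & 1\end{bmatrix}\in UT ,
\]
and since $z\mapsto x^{2}z$ is a bijection of $k$, conjugation by $\begin{bmatrix} x & y \\ 0 & x^{-1}\end{bmatrix}$ carries $UT$ \emph{onto} $UT$. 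Thus $U\subseteq N_G(UT)$, and together with the previous paragraph $N_G(UT)=U$.

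Finally I would read off the structure of $U$. The assignment $\begin{bmatrix} x & y \\ 0 & x^{-1}\end{bmatrix}\mapsto\begin{bmatrix} x & 0 \\ 0 & x^{-1}\end{bmatrix}$ is a surjective homomorphism $U\to\Delta$ with kernel $UT$, and $\Delta$ is a subgroup of $U$ mapped isomorphically onto $\Delta$ by it; hence $U$ is the (internal) semidirect product $UT\rtimes\Delta$. Since $UT\cong k^{+}$ and $\Delta\cong k^{\ast}$ are abelian and $UT\trianglelefteq U$ with $U/UT$ abelian, $U$ (and therefore $N_G(UT)$) is metabelian.

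I do not expect a genuine obstacle: everything reduces to the two $2\times 2$ matrix identities above. The one point that calls for a word of care is the reverse inclusion --- one must check that conjugation by an element of $U$ maps $UT$ \emph{onto} $UT$, not merely into it --- and this is immediate from the bijectivity of $z\mapsto x^{2}z$ on the field $k$ (equivalently, from $UT$ being normal in $U$).
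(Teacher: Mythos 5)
Your proof is correct and follows essentially the same route as the paper: the inclusion $N_G(UT)\subseteq U$ comes from the conjugation formula of Eq.~\ref{eq:conjugation2} (the paper cites its Proposition~5, whose proof is exactly this computation), and the semidirect product structure comes from the factorization $U=UT\cdot\Delta$ with trivial intersection, which your kernel-and-complement argument restates. Your explicit check that conjugation by $U$ maps $UT$ \emph{onto} $UT$ is a small point the paper leaves implicit, but it is not a different method.
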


\begin{proof}From Proposition 5, we get $N_{G}(UT)=U$. Given $%
\begin{bmatrix}
x & y \\ 
0 & x^{-1}%
\end{bmatrix}%
\in U~(x\in k\ast )$ we have $%
\begin{bmatrix}
x & y \\ 
0 & x^{-1}%
\end{bmatrix}%
=%
\begin{bmatrix}
1 & xy \\ 
0 & 1%
\end{bmatrix}%
\begin{bmatrix}
x & 0 \\ 
0 & x^{-1}%
\end{bmatrix}%
$; so, $U$ is the product of the subgroups $UT\trianglelefteq N_{G}(UT)=U$
and $\Delta $. Moreover, these subgroups intersect in the identity; so $U$
is their semidirect product. \end{proof}

\begin{rem}
The elements of order $2$ in $U$ are precisely the nontrivial elements of $%
UT $ since $x+x^{-1}=0$ implies $x=1$.
\end{rem}

\begin{cor}
By taking the inverse transpose automorphism we see that $N_{G}(LT)=L$. It
is metabelian and a semidirect product of $LT$ by $\Delta $.
\end{cor}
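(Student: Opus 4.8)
The plan is to obtain this corollary from Proposition~6 by transport of structure along the inverse transpose automorphism. Write $\tau$ for the automorphism $\begin{bmatrix} a & b \\ c & d \end{bmatrix} \mapsto \begin{bmatrix} d & c \\ b & a \end{bmatrix}$ of $G$ (\cf\ Remark~1). First I would record how $\tau$ acts on the subgroups appearing in Proposition~6: a one-line matrix computation gives $\tau(UT) = LT$ and $\tau(U) = L$, while $\tau(\Delta) = \Delta$ as a \emph{set}, since $\begin{bmatrix} x & 0 \\ 0 & x^{-1} \end{bmatrix} \mapsto \begin{bmatrix} x^{-1} & 0 \\ 0 & x \end{bmatrix}$ and inversion permutes $k^{\ast}$.

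Next I would invoke the standard fact that any automorphism carries normalizers to normalizers, i.e. $\tau\big(N_G(H)\big) = N_G\big(\tau(H)\big)$ for every subgroup $H \le G$. Taking $H = UT$ and using $N_G(UT) = U$ from Proposition~6 then yields
\[
N_G(LT) = N_G\big(\tau(UT)\big) = \tau\big(N_G(UT)\big) = \tau(U) = L,
\]
which is the first assertion.

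For the remaining claims, being metabelian is an isomorphism invariant, so $L \cong U$ is metabelian; and $\tau$, being an isomorphism, carries the decomposition $U = UT \rtimes \Delta$ of Proposition~6 to $L = \tau(UT) \rtimes \tau(\Delta) = LT \rtimes \Delta$. Concretely one checks $\tau(UT) \trianglelefteq \tau(U) = L$, together with $\tau(UT) \cap \tau(\Delta) = \tau(UT \cap \Delta) = \{1\}$ and $\tau(UT)\,\tau(\Delta) = \tau(UT\cdot\Delta) = \tau(U) = L$, giving the claimed semidirect product of $LT$ by $\Delta$.

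I do not expect a genuine obstacle here: the corollary is a formal consequence of Proposition~6 and the automorphism of Remark~1. The only points demanding attention are the bookkeeping of the images of $UT$, $U$, $\Delta$ under $\tau$ --- in particular the mildly unobvious fact that $\tau$ fixes $\Delta$ setwise but not pointwise --- and the (already guaranteed) fact that $\tau$ is an automorphism of $G = SL_2(k)$ itself, not merely of $GL_2(k)$.
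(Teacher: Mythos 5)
Your proposal is correct and is exactly the paper's intended argument: the paper gives no separate proof beyond the phrase ``by taking the inverse transpose automorphism,'' and your write-up simply makes explicit the transport of Proposition~6 along that automorphism (including the correct bookkeeping $\tau(UT)=LT$, $\tau(U)=L$, $\tau(\Delta)=\Delta$ setwise and the fact that automorphisms preserve normalizers).
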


\begin{cor}
If $g$ has order $2$ and $M=C_{G}(g)$, then $N_{G}(M)$ is metabelian and is
a semidirect product of $k^{+}$ by $k^{\ast }$.
\end{cor}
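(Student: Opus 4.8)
The plan is to reduce an arbitrary element $g$ of order $2$ to the single representative $g_{0}=\begin{bmatrix}1 & 1\\ 0 & 1\end{bmatrix}$ and then read off the conclusion from Propositions 5 and 6. First I would recall from the Remark that an element of $G$ has order $2$ exactly when it is a nontrivial element of trace $0$, and (since $k$ has characteristic $2$) this is the same as being a nontrivial unipotent element; by the classification of Jordan canonical forms in $G$ the only conjugacy class of such elements is that of $g_{0}$. So, given $g$ of order $2$, I would choose $h\in G$ with $g=hg_{0}h^{-1}$ and pass to the inner automorphism $\gamma_{h}\colon x\mapsto hxh^{-1}$ of $G$. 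Because $\gamma_{h}$ carries centralizers to centralizers and normalizers to normalizers, we get $M=C_{G}(g)=\gamma_{h}\bigl(C_{G}(g_{0})\bigr)$ and hence $N_{G}(M)=\gamma_{h}\bigl(N_{G}(C_{G}(g_{0}))\bigr)$; since being metabelian and being a semidirect product of $k^{+}$ by $k^{\ast}$ are isomorphism invariants, it suffices to treat $g_{0}$.

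For $g_{0}$, Proposition 5 (with $\lambda=1$) gives $C_{G}(g_{0})=UT$, and Proposition 6 gives $N_{G}(UT)=U$, which by that proposition is metabelian and a semidirect product of its normal subgroup $UT$ by $\Delta$. It remains to identify the two factors: $\begin{bmatrix}1 & y\\ 0 & 1\end{bmatrix}\mapsto y$ is an isomorphism of $UT$ onto $k^{+}$ and $\begin{bmatrix}x & 0\\ 0 & x^{-1}\end{bmatrix}\mapsto x$ is an isomorphism of $\Delta$ onto $k^{\ast}$, so $N_{G}(UT)$ is a semidirect product of $k^{+}$ by $k^{\ast}$. Transporting this through $\gamma_{h}$ — which sends the decomposition $U=UT\rtimes\Delta$ to $N_{G}(M)=\gamma_{h}(UT)\rtimes\gamma_{h}(\Delta)$ with $\gamma_{h}(UT)\cong k^{+}$ and $\gamma_{h}(\Delta)\cong k^{\ast}$ — gives the stated conclusion.

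I do not expect a genuine obstacle: this corollary is bookkeeping layered on Propositions 5 and 6. The two points that need a little care are (a) that every element of order $2$ in $G$ is conjugate to $g_{0}$, so that the isomorphism type of $N_{G}(C_{G}(g))$ does not depend on the choice of $g$ — this uses the Jordan-form list together with the earlier remark that elements of $G$ that are conjugate in $GL_{2}(k)$ are already conjugate in $G$; and (b) the functoriality of the operators $C_{G}(-)$ and $N_{G}(-)$ under the inner automorphism $\gamma_{h}$, which is immediate from their definitions.
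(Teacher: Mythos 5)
Your proof is correct and follows the route the paper intends: the corollary is stated without proof precisely because it is immediate from the conjugacy of all order-$2$ elements to $\begin{bmatrix}1 & 1\\ 0 & 1\end{bmatrix}$ together with Proposition 5 ($C_{G}=UT\cong k^{+}$) and Proposition 6 ($N_{G}(UT)=U=UT\rtimes\Delta$ with $\Delta\cong k^{\ast}$), transported by an inner automorphism. Your write-up just makes that bookkeeping explicit, so it matches the paper's approach.
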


\begin{prop}
Let $(\lambda _{1},\lambda _{2})\in (k^{\ast })^{2}$. Then $%
\begin{bmatrix}
1 & \lambda _{1} \\ 
0 & 1%
\end{bmatrix}%
$ and $%
\begin{bmatrix}
1 & 0 \\ 
\lambda _{2} & 1%
\end{bmatrix}%
$ do not commute.
\end{prop}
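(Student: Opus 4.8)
The plan is to settle this by the direct two-by-two matrix multiplication, the whole point being that multiplying a nontrivial upper unitriangular matrix by a nontrivial lower unitriangular one contributes a nonzero term to the diagonal, and in the opposite order it contributes it to the other diagonal entry. Concretely, I would compute $\begin{bmatrix} 1 & \lambda_1 \\ 0 & 1 \end{bmatrix}\begin{bmatrix} 1 & 0 \\ \lambda_2 & 1 \end{bmatrix} = \begin{bmatrix} 1+\lambda_1\lambda_2 & \lambda_1 \\ \lambda_2 & 1 \end{bmatrix}$ and $\begin{bmatrix} 1 & 0 \\ \lambda_2 & 1 \end{bmatrix}\begin{bmatrix} 1 & \lambda_1 \\ 0 & 1 \end{bmatrix} = \begin{bmatrix} 1 & \lambda_1 \\ \lambda_2 & 1+\lambda_1\lambda_2 \end{bmatrix}$, and compare the $(1,1)$-entries: $1+\lambda_1\lambda_2$ versus $1$. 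Since $\lambda_1,\lambda_2\in k^{\ast}$ we have $\lambda_1\lambda_2\neq 0$, so the two products differ. No division occurs, so characteristic $2$ plays no special role here; indeed $\lambda_1\lambda_2\neq 0$ holds over any field.

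Alternatively, and more in the spirit of the surrounding development, I would invoke Proposition 5: the centralizer in $G$ of $\begin{bmatrix} 1 & \lambda_1 \\ 0 & 1 \end{bmatrix}$ is exactly $UT$. The matrix $\begin{bmatrix} 1 & 0 \\ \lambda_2 & 1 \end{bmatrix}$ is lower unitriangular with $\lambda_2\neq 0$, hence is not upper unitriangular, hence does not lie in $UT=C_{G}\!\left(\begin{bmatrix} 1 & \lambda_1 \\ 0 & 1 \end{bmatrix}\right)$; therefore the two elements do not commute. Symmetrically one could instead use Corollary 5, which gives $C_{G}\!\left(\begin{bmatrix} 1 & 0 \\ \lambda_2 & 1 \end{bmatrix}\right)=LT$.

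There is essentially no obstacle in this statement; it is immediate by either route. I expect its role to be organizational: it records that the ``upper'' and ``lower'' one-parameter unipotent subgroups meet only in the identity and together generate a nonabelian subgroup, a fact one needs when tracking how an injective endomorphism of $G$ acts on the various maximal abelian subgroups and their normalizers — diagonal, upper triangular, lower triangular — identified in the preceding propositions. The only point deserving a word of caution in the $SL_2$-over-characteristic-$2$ setting is the temptation to cancel $2$'s, but since the relevant obstruction is the nonvanishing of a product of units, that temptation never arises here.
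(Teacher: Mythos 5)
Your proposal is correct, and your second argument is precisely the paper's proof: the paper cites Proposition 5 and its corollary to identify the centralizers as $UT$ and $LT$ and concludes from their trivial intersection, which is the same reasoning as your observation that the lower unitriangular matrix with $\lambda_2\neq 0$ cannot lie in $UT$. Your first argument, the direct comparison of the $(1,1)$-entries $1+\lambda_1\lambda_2$ and $1$, is an even more elementary verification not given in the paper, and it is also correct since $\lambda_1\lambda_2\neq 0$ over any field.
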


\begin{proof}
The centralizers of $%
\begin{bmatrix}
1 & \lambda _{1} \\ 
0 & 1%
\end{bmatrix}%
$ and $%
\begin{bmatrix}
1 & 0 \\ 
\lambda _{2} & 1%
\end{bmatrix}%
$ are $UT$ and $LT$ respectively and these have trivial intersection.\end{proof}

Every element of order $2$ in $G$ is conjugate to $%
\begin{bmatrix}
1 & 1 \\ 
0 & 1%
\end{bmatrix}%
$ which lies in the subgroup $SL_{2}(\mathbb{F}_{4});$ $SL_{2}(\mathbb{F}%
_{4})$ is a simple group of order $60$. Every simple group of order $60$ is
isomorphic to the alternating group $A_{5}$ (see \cite{D}). Since $A_{5}$ is
generated by $3$-cycles, $%
\begin{bmatrix}
1 & 1 \\ 
0 & 1%
\end{bmatrix}%
$ is the product of elements of order $3$. Thus $G$ is generated by its
elements of odd order. Every element of odd order is conjugate to an element
of $\Delta $. Given $\lambda \in k^{\ast }$ we have%
\begin{equation*}
\begin{bmatrix}
0 & \lambda \\ 
\lambda ^{-1} & 0%
\end{bmatrix}%
\begin{bmatrix}
0 & 1 \\ 
1 & 0%
\end{bmatrix}%
=%
\begin{bmatrix}
\lambda & 0 \\ 
0 & \lambda ^{-1}%
\end{bmatrix}%
;
\end{equation*}%
so, every element of odd order is the product of two elements of order $2$
and $G$ is generated by its elements of order $2$. These are conjugates of $%
\begin{bmatrix}
1 & 1 \\ 
0 & 1%
\end{bmatrix}%
$ and, by 
 Eq.~\ref{eq:conjugation2}, have the form $%
\begin{bmatrix}
1+su & s^{2} \\ 
u^{2} & 1+su%
\end{bmatrix}%
$.

These depend on $s$ and $u$ only; so, if $s\neq 0$, we can take the
conjugating matrix to be the lower triangular matrix $%
\begin{bmatrix}
s & 0 \\ 
u & s^{-1}%
\end{bmatrix}%
$ while if $s=0$ then $%
\begin{bmatrix}
1+su & s^{2} \\ 
u^{2} & 1+su%
\end{bmatrix}%
=%
\begin{bmatrix}
1 & 0 \\ 
u^{2} & 1%
\end{bmatrix}%
$ is itself lower triangular.

(Note that the above is an arbitrary element of $\ LT$ as the Foebenius map $%
k\rightarrow k,~x\longmapsto x^{2}$ is an automorphism.) It follows that $G$
is generated by $%
\begin{bmatrix}
1 & 1 \\ 
0 & 1%
\end{bmatrix}%
$ and $L$. Now $%
\begin{bmatrix}
1 & 0 \\ 
1 & 1%
\end{bmatrix}%
\in L$ and $%
\begin{bmatrix}
0 & 1 \\ 
1 & 0%
\end{bmatrix}%
$ conjugates $%
\begin{bmatrix}
1 & 0 \\ 
1 & 1%
\end{bmatrix}%
$ to $%
\begin{bmatrix}
1 & 1 \\ 
0 & 1%
\end{bmatrix}%
$; so $G$ is generated by $%
\begin{bmatrix}
0 & 1 \\ 
1 & 0%
\end{bmatrix}%
$ and $L$. It is therefore generated by the larger set $\Delta ^{\prime }$
and $L$ and hence generated by the subgroups $N_{G}(\Delta )$ and $L$.

Now suppose \ $\varphi\colon G\rightarrow G$ is an injective endomorphism. Let $%
\theta \in k$ be a primitive cube root of unity. Let $g$ be the order $3\,\ $%
element $%
\begin{bmatrix}
\theta & 0 \\ 
0 & \theta ^{-1}%
\end{bmatrix}%
$.

Then $\varphi (g)$ has order $3$ so is a conjugate to $%
\begin{bmatrix}
\theta & 0 \\ 
0 & \theta ^{-1}%
\end{bmatrix}%
$.

Let the inner automorphism $\alpha $ conjugate $\varphi (g)$ to $g$. Then $%
\alpha \varphi $ is also an injective endomorphism. Since $\alpha \varphi $
fixes $g$ it must map $\Delta =C_{G}(g)$ into $\Delta $ and so restricts to
an injective endomorphism of $\Delta $. Now $\Delta $ is isomorphic to $%
k^{\ast }$ which, by the example $\overrightarrow{\mathbb{F}_{p}}$, of
Section 2 with $p=2$ is co-Hopfian. Hence, $\alpha \varphi $ restricts to an
automorphism of $\Delta $. Then $\alpha \varphi $ maps $N_{G}(\Delta )$ into 
$N_{G}(\Delta )$. Now $\alpha \varphi \left( 
\begin{bmatrix}
0 & 1 \\ 
1 & 0%
\end{bmatrix}%
\right) $ maps to an element of order $2$ so must lie in $\Delta ^{\prime }$%
. Say $\alpha \varphi \left( 
\begin{bmatrix}
0 & 1 \\ 
1 & 0%
\end{bmatrix}%
\right) =%
\begin{bmatrix}
0 & \lambda \\ 
\lambda ^{-1} & 0%
\end{bmatrix}%
$. Since $\Delta \leq \text{Im}(\alpha \varphi )$, $%
\begin{bmatrix}
\lambda ^{-1} & 0 \\ 
0 & \lambda%
\end{bmatrix}%
\begin{bmatrix}
0 & \lambda \\ 
\lambda ^{-1} & 0%
\end{bmatrix}%
=%
\begin{bmatrix}
0 & 1 \\ 
1 & 0%
\end{bmatrix}%
$ lies in $\text{Im}(\alpha \varphi ).$ Now $N_{G}(\Delta )$ is generated by 
$%
\begin{bmatrix}
0 & 1 \\ 
1 & 0%
\end{bmatrix}%
$ and $\Delta $ ; so $\alpha \varphi $ restricts to an automorphism of $%
N_{G}(\Delta )$.

What does $\alpha \varphi $ do on $L$? $L$ is generated by $\Delta $ and
elements of order $2$ which are conjugated by $\Delta $ into commuting
elements of order $2$. What elements of order $2$ are conjugated by $\Delta $
into elements which commute with the original? As we have seen before an
arbitrary element of order $2$ has the form%
\begin{equation*}
\begin{bmatrix}
1+su & s^{2} \\ 
u^{2} & 1+su%
\end{bmatrix}%
\end{equation*}%
with $s\neq 0$ or $u\neq 0$. Let $\lambda \in k^{\ast }\backslash \{1\}$.
Computing%
\begin{equation*}
\begin{bmatrix}
\lambda & 0 \\ 
0 & \lambda ^{-1}%
\end{bmatrix}%
\begin{bmatrix}
1+su & s^{2} \\ 
u^{2} & 1+su%
\end{bmatrix}%
\begin{bmatrix}
\lambda ^{-1} & 0 \\ 
0 & \lambda%
\end{bmatrix}%
\end{equation*}%
we get%
\begin{equation*}
\begin{bmatrix}
1+su & \lambda ^{2}s^{2} \\ 
\lambda ^{-2}u^{2} & 1+su%
\end{bmatrix}%
.
\end{equation*}%
When does this commute with $%
\begin{bmatrix}
1+su & s^{2} \\ 
u^{2} & 1+su%
\end{bmatrix}%
$?%
\begin{equation*}
\begin{bmatrix}
1+su & s^{2} \\ 
u^{2} & 1+su%
\end{bmatrix}%
\begin{bmatrix}
1+su & \lambda ^{2}s^{2} \\ 
\lambda ^{-2}u^{2} & 1+su%
\end{bmatrix}%
=%
\begin{bmatrix}
(1+su)^{2}+\lambda ^{-2}s^{2}u^{2} & \ast \\ 
\ast & \ast%
\end{bmatrix}%
\end{equation*}

\begin{equation*}
\begin{bmatrix}
1+su & \lambda ^{2}s^{2} \\ 
\lambda ^{-2}u^{2} & 1+su%
\end{bmatrix}%
\begin{bmatrix}
1+su & s^{2} \\ 
u^{2} & 1+su%
\end{bmatrix}%
=%
\begin{bmatrix}
(1+su)^{2}+\lambda ^{2}s^{2}u^{2} & \ast \\ 
\ast & \ast%
\end{bmatrix}%
\end{equation*}%
so $\lambda ^{2}s^{2}u^{2}=\lambda ^{-2}s^{2}u^{2}$ and $\lambda su=\lambda
^{-1}su$ and $(\lambda +\lambda ^{-1})su=0$. Since $\lambda \neq 1,\lambda
+\lambda ^{-1}\neq 0$. Therefore, either $s=0$ or $u=0$. It follows that%
\begin{equation*}
\begin{bmatrix}
1+su & s^{2} \\ 
u^{2} & 1+su%
\end{bmatrix}%
=%
\begin{bmatrix}
1 & 0 \\ 
u^{2} & 1%
\end{bmatrix}%
\in LT
\end{equation*}%
or%
\begin{equation*}
\begin{bmatrix}
1+su & s^{2} \\ 
u^{2} & 1+su%
\end{bmatrix}%
=%
\begin{bmatrix}
1 & s^{2} \\ 
0 & 1%
\end{bmatrix}%
\in UT.
\end{equation*}%
By Proposition 7 we cannot have nontrivial instances of both $LT$ and $UT$
in the image of $\alpha \varphi $ on $L$. Now let $\beta $ be the identity
automorphism if the image of $\alpha \varphi $ on $L$ contains nontrivial
elements of $LT$ and be the inverse transpose automorphism (conjugation by $%
\begin{bmatrix}
0 & 1 \\ 
1 & 0%
\end{bmatrix}%
$) if the image of $\alpha \varphi $ on $L$ contains nontrivial elements of $%
UT$.


The map $\varphi$ 
will be an automorphism if and only if $\beta \alpha \varphi 
$ is. The map $\beta \alpha \varphi $ leaves $N_{G}(\Delta )$ alone and maps $L$
into $L$.%
\begin{equation*}
\beta \alpha \varphi \left( 
\begin{bmatrix}
1 & 0 \\ 
1 & 1%
\end{bmatrix}%
\right) =%
\begin{bmatrix}
1 & 0 \\ 
\lambda & 1%
\end{bmatrix}%
\end{equation*}%
for some $\lambda \in k^{\ast }$. Let $z\in k^{\ast }$ be arbitrary. Then 
\begin{equation*}
\begin{bmatrix}
\sqrt{\lambda z^{-1}} & 0 \\ 
0 & \sqrt{\lambda ^{-1}z}%
\end{bmatrix}%
\begin{bmatrix}
1 & 0 \\ 
\lambda & 1%
\end{bmatrix}%
\begin{bmatrix}
\sqrt{\lambda ^{-1}z} & 0 \\ 
0 & \sqrt{\lambda z^{-1}}%
\end{bmatrix}%
=%
\begin{bmatrix}
1 & 0 \\ 
z & 1%
\end{bmatrix}%
\end{equation*}%
so an arbitrary element of $LT$ lies in the image of $\beta \alpha \varphi $
on $L$.

Since $L$ is generated by $\Delta $ and $LT$ , $\beta \alpha \varphi $
restricts to an automorphism of $L$. Since $N_{G}(\Delta )$ and $L$, $\beta
\alpha \varphi $ is an automorphism of $G$ and thus $\varphi $ is also.

We have proven:

\begin{thm}
$G$ is co-Hopfian.
\end{thm}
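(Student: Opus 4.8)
The plan is to show that every injective endomorphism $\varphi\colon G\to G$ is an automorphism by composing $\varphi$ with suitable automorphisms of $G$ until the resulting map is visibly surjective, exploiting the fact that $G$ is generated by the two subgroups $N_G(\Delta)$ and $L$ (a fact already established above). First I would pick an element $g$ of order $3$, namely $g=\begin{bmatrix}\theta & 0\\ 0 & \theta^{-1}\end{bmatrix}$ with $\theta$ a primitive cube root of unity in $k$. Since $\varphi$ preserves orders of torsion elements and every element of order $3$ is $G$-conjugate to $g$ (by the Jordan form analysis), there is an inner automorphism $\alpha$ with $\alpha\varphi(g)=g$. Then $\alpha\varphi$ maps $C_G(g)=\Delta$ into $C_G(\alpha\varphi(g))=\Delta$; but $\Delta\cong k^{\ast}=\overrightarrow{\mathbb{F}_2}$ is co-Hopfian as a group (from the examples in Section 2), so $\alpha\varphi$ restricts to an \emph{automorphism} of $\Delta$.

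Next I would push this up to $N_G(\Delta)$. Because $\alpha\varphi$ carries $\Delta$ onto $\Delta$, it carries $N_G(\Delta)$ into $N_G(\Delta)$; the image of the order-$2$ element $\begin{bmatrix}0&1\\1&0\end{bmatrix}$ must be an involution in $N_G(\Delta)$, hence (by Proposition 4 and the remark on involutions) lies in $\Delta'$, say equals $\begin{bmatrix}0&\lambda\\\lambda^{-1}&0\end{bmatrix}$. Since $\Delta$ is already in the image, left-multiplying by the appropriate diagonal element shows $\begin{bmatrix}0&1\\1&0\end{bmatrix}$ itself is in the image; as $N_G(\Delta)=\langle\Delta,\begin{bmatrix}0&1\\1&0\end{bmatrix}\rangle$, the map $\alpha\varphi$ restricts to an automorphism of $N_G(\Delta)$.

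The remaining and most delicate step is to control $\alpha\varphi$ on $L$. Here I would argue that $L$ is generated by $\Delta$ together with the involutions in $L$, and that the involutions of $L$ whose $\Delta$-conjugates commute with them are exactly the nontrivial elements of $UT$ together with those of $LT$; a short commutator computation using Eq.~(\ref{eq:conjugation1}) and Eq.~(\ref{eq:conjugation2}), combined with $\lambda+\lambda^{-1}\ne 0$ for $\lambda\ne 1$, forces $s=0$ or $u=0$ in the general involution $\begin{bmatrix}1+su & s^2\\ u^2 & 1+su\end{bmatrix}$. By Proposition 7, $UT$ and $LT$ generate $G$, so the image of $\alpha\varphi$ restricted to $L$ cannot contain nontrivial elements of both; composing with a further automorphism $\beta$ (the identity or the inverse-transpose conjugation by $\begin{bmatrix}0&1\\1&0\end{bmatrix}$) we may assume $\beta\alpha\varphi(L)\subseteq L$ and that $\beta\alpha\varphi\begin{bmatrix}1&0\\1&1\end{bmatrix}=\begin{bmatrix}1&0\\\lambda&1\end{bmatrix}$ for some $\lambda\in k^{\ast}$. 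Note $\beta$ fixes $N_G(\Delta)$ setwise, so $\beta\alpha\varphi$ is still an automorphism of $N_G(\Delta)$.

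Finally, conjugating $\begin{bmatrix}1&0\\\lambda&1\end{bmatrix}$ by the diagonal matrix $\begin{bmatrix}\sqrt{\lambda z^{-1}}&0\\0&\sqrt{\lambda^{-1}z}\end{bmatrix}$ (using that every element of $k$ has a unique square root, since the Frobenius is bijective) produces $\begin{bmatrix}1&0\\z&1\end{bmatrix}$ for arbitrary $z\in k^{\ast}$, so $LT$ lies in the image of $\beta\alpha\varphi$; since $L=\langle\Delta,LT\rangle$ and $\Delta$ is already in the image, $\beta\alpha\varphi$ restricts to an automorphism of $L$. Then $\beta\alpha\varphi$ is surjective onto $\langle N_G(\Delta),L\rangle=G$, hence an automorphism, hence $\varphi$ is an automorphism. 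The main obstacle I anticipate is the $L$-step: correctly identifying which involutions survive the commuting condition and then invoking Proposition 7 to rule out the mixed case, together with keeping track that the corrective automorphisms $\alpha$ and $\beta$ do not disturb the control already gained on $N_G(\Delta)$.
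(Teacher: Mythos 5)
Your proposal is correct and follows essentially the same path as the paper's own proof: normalize an order-$3$ diagonal element by an inner automorphism $\alpha$, use co-Hopficity of $k^{\ast}$ to get an automorphism of $\Delta$ and then of $N_{G}(\Delta)$, analyze which involutions have commuting $\Delta$-conjugates to force the image of the involutions of $L$ into $LT$ or $UT$, correct by $\beta$, and recover all of $LT$ by diagonal conjugation using square roots, concluding via $G=\langle N_{G}(\Delta),L\rangle$. One minor slip: Proposition 7 says that nontrivial elements of $UT$ and $LT$ do not commute (not that they generate $G$); it is this non-commuting, applied to the images of the pairwise-commuting involutions of $LT\subseteq L$, that rules out the mixed case, exactly as you intend.
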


\section{An application to Turner groups}

\begin{defi}
An element $g$ of a group $\Gamma $ is a \emph{test element }provided
every endomorphism of $\Gamma $ which fixes $g$ is an automorphism.
\end{defi}

\begin{defi}[\cite{OT}] A hyperbolic group $\Gamma $ is \emph{stably hyperbolic \ }if
for each endomorphism $\varphi\colon \Gamma \rightarrow \Gamma $ and each
positive integer $n$, there is an integer $m\geq n$ such that $\varphi
^{m}(\Gamma )$ is hyperbolic.
\end{defi}

\begin{rem}
Finite groups and finitely generated free groups are stably hyperbolic.
\end{rem}

In \cite{OT} O'Neil and Turner proved

\begin{thm}
In any stably hyperbolic group an element is a test element if and only if
it lies in no proper retract.
\end{thm}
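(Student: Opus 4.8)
The plan is to prove the two implications separately; the forward one is a formality and the reverse one carries all the weight.

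\emph{Test element $\Rightarrow$ lies in no proper retract.} Suppose $g$ lies in a proper retract $R$ of $\Gamma$, with retraction $\rho\colon\Gamma\to R$. Composing $\rho$ with the inclusion $R\hookrightarrow\Gamma$ gives an endomorphism $\widehat{\rho}$ of $\Gamma$ with image $R$; since $R\neq\Gamma$, $\widehat{\rho}$ is not surjective, hence not an automorphism. As $g\in R$ and $\rho$ restricts to the identity on $R$, we have $\widehat{\rho}(g)=g$. Thus $g$ is fixed by a non-automorphism, so $g$ is not a test element. Taking the contrapositive gives this implication, and no hypothesis on $\Gamma$ is used here.

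\emph{Lies in no proper retract $\Rightarrow$ test element.} I argue by contraposition: assuming $g$ is not a test element, I must exhibit a proper retract of $\Gamma$ containing $g$. Fix an endomorphism $\varphi$ of $\Gamma$ with $\varphi(g)=g$ which is not an automorphism. Every iterate $\varphi^{m}$ then fixes $g$, and none is an automorphism (otherwise $\varphi$ would be both injective and surjective). Set $H_{m}=\varphi^{m}(\Gamma)$, so $\Gamma=H_{0}\supseteq H_{1}\supseteq H_{2}\supseteq\cdots$ with $g\in H_{m}$ for all $m$. By stable hyperbolicity there are arbitrarily large $m$ with $H_{m}$ hyperbolic. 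For such $m$, $\varphi^{m}$ restricts to an endomorphism $\psi_{m}$ of the hyperbolic group $H_{m}$ which fixes $g$. If $\psi_{m}$ is an automorphism of $H_{m}$, then $\varphi^{m}\colon\Gamma\to H_{m}$ is onto and $\psi_{m}^{-1}\varphi^{m}\colon\Gamma\to H_{m}$ is a retraction, so $H_{m}$ is a retract of $\Gamma$ containing $g$; it is proper, since $H_{m}=\Gamma$ would make $\varphi$ surjective and then, by the Hopf property of hyperbolic groups, an automorphism, contrary to choice. So in this case $H_{m}$ is the sought retract.

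The substance of the theorem is the remaining case, where the failure of $\varphi$ to be an automorphism survives restriction: $\psi_{m}$ is a non-automorphism of $H_{m}$ for every hyperbolic term along the chain, and the chain $(H_{m})$ need not stabilize. Here one cannot take an iterated image as the retract -- indeed $\varphi^{m}(\Gamma)$ need not be a retract of $\Gamma$ at all: already in the free group $\langle a,b\rangle$ the endomorphism $a\mapsto a^{2},\ b\mapsto b$ fixes $b$ and yields the strictly decreasing chain $\langle a^{2^{m}},b\rangle$ of non-retracts, whose genuine retract $\langle b\rangle$ is not an iterated image. The plan is instead to invoke the structure of retracts in hyperbolic groups: retracts are quasiconvex, hence hyperbolic, and one has control over descending chains of them inside $\Gamma$. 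Following Turner's original free-group argument as extended in \cite{OT}, one studies the eventual image $\bigcap_{m}H_{m}$ together with the $\varphi$-action on it, uses stable hyperbolicity to keep the entire analysis inside the hyperbolic world at every stage, and extracts from it a retraction of $\Gamma$ onto a subgroup containing $g$, which is proper precisely because $\varphi$ is not an automorphism. Carrying this out -- and in particular excluding the pathology of an endless strictly descending chain of hyperbolic images none of which is a retract -- is the crux, and is exactly where ``stably hyperbolic'' is indispensable, since it is what makes the Hopf property and the quasiconvexity of retracts repeatedly available along the chain. The elementary manipulations above are then just the bookkeeping around that core.
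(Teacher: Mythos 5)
This theorem is not proved in the paper at all: it is quoted from O'Neil and Turner \cite{OT}, and the paper's only commentary is the remark that the ``lies in no proper retract'' direction is trivially necessary, so the entire content is the sufficiency direction. Measured against that, your proposal has a genuine gap. The easy direction is fine, and your special case is correct as far as it goes: if some hyperbolic image $H_m=\varphi^m(\Gamma)$ has the property that $\varphi^m$ restricts to an automorphism $\psi_m$ of $H_m$, then $\psi_m^{-1}\varphi^m$ is indeed a retraction of $\Gamma$ onto $H_m$ containing $g$ (though note that even your properness argument leans on the Hopficity of hyperbolic groups, a deep external theorem, not on anything elementary). But the case you yourself identify as ``the substance of the theorem'' --- where no iterated image works and the chain $H_0\supseteq H_1\supseteq\cdots$ never yields an automorphism on restriction --- is not argued: you describe a plan (pass to the eventual image $\bigcap_m H_m$, use quasiconvexity of retracts, ``follow Turner's argument as extended in \cite{OT}'') and then state that carrying it out is the crux. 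Deferring the crux to the very paper whose theorem you are asked to prove means the hard implication is not established; as a standalone proof this is incomplete, not merely terse. Your free-group example with $a\mapsto a^2,\ b\mapsto b$ is a good sanity check that iterated images alone cannot work, but it does not substitute for the missing construction of the retract in the general case.

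If you want to complete it, the known route (Turner for free groups, \cite{OT} for the stably hyperbolic case) is to show that along a subsequence of hyperbolic images the restrictions of $\varphi$ eventually become injective with image a quasiconvex subgroup on which $\varphi$ acts as an automorphism, and that this stable image is then a retract of $\Gamma$ containing $g$; making ``eventually become injective'' and ``is a retract'' precise is exactly where the hyperbolic-group machinery (Hopficity, quasiconvexity of retracts, control of descending chains) enters, and none of that is reproduced in your write-up.
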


\begin{rem}
Clearly lying in no proper retract is a necessary condition to be a test
element. So the real theorem is that in stably hyperbolic groups this
condition is sufficient.
\end{rem}

\begin{defi}[\cite{FGLS}] A group $G$ is a \emph{Turner group \ }provided every
element which is excluded by every proper retract is a test element.
\end{defi}

It was shown in \cite{FGLS} that the class of Turner groups is not the model
class of any set of first order sentences in a language appropriate for
group theory. In that paper it was posed as an open question whether or not
the class of Turner groups satisfies the weaker condition of closure under
elementary equivalence. We shall see below that it follows from the
co-Hopficity of $G$ the class of Turner groups is not closed under
elementary equivalence.

\begin{prop}[\cite{FGLS}] Every co-Hopfian simple group is a Turner group.
\end{prop}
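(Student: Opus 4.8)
The plan is to verify directly that a co-Hopfian simple group $G$ satisfies the defining property of a Turner group: every element excluded by every proper retract is a test element. First I would recall that a \emph{retract} of $G$ is a subgroup $H$ for which there exists an endomorphism $\rho\colon G\to G$ with image $H$ and $\rho|_{H}=\mathrm{id}_{H}$ (equivalently, $\rho^{2}=\rho$); it is \emph{proper} if $H\neq G$. The key observation is that in a \emph{simple} group the only retracts are $\{1\}$ and $G$ itself: if $\rho$ is an idempotent endomorphism, then $\ker\rho$ is a normal subgroup, so $\ker\rho=\{1\}$ or $\ker\rho=G$; in the first case $\rho$ is injective, and in the second $\rho$ is trivial with image $\{1\}$.

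Next I would combine injectivity with co-Hopficity. If $\ker\rho=\{1\}$ then $\rho$ is an injective endomorphism of $G$, and since $G$ is co-Hopfian, $\rho$ is an automorphism, hence $\mathrm{Im}(\rho)=G$, so the retract is not proper. Therefore the only proper retract of $G$ is $\{1\}$. Consequently, the set of elements of $G$ that are excluded by every proper retract is precisely $G\setminus\{1\}$ — every nontrivial element fails to lie in $\{1\}$, while the identity lies in it.

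It then remains to check that every nontrivial element of $G$ is a test element. Let $g\neq 1$ and let $\varphi\colon G\to G$ be an endomorphism with $\varphi(g)=g$. Then $\varphi$ is not the trivial endomorphism (it does not kill $g$), so $\ker\varphi\neq G$; by simplicity $\ker\varphi=\{1\}$, i.e.\ $\varphi$ is injective. Co-Hopficity of $G$ now forces $\varphi$ to be an automorphism. Hence $g$ is a test element. Since every element excluded by every proper retract (namely, every nontrivial element) is a test element, $G$ is a Turner group.

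I expect no serious obstacle here: the proof is essentially a two-line deduction once the notion of retract is unwound, with simplicity handling the kernel and co-Hopficity upgrading injectivity to surjectivity. The only point requiring a little care is the bookkeeping at the identity element — confirming that $1$ lies in the proper retract $\{1\}$ and so is \emph{not} among the elements "excluded by every proper retract," so that the Turner condition is vacuous for $g=1$ and the argument above covers all remaining cases.
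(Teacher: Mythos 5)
Your argument is correct and is essentially the paper's proof: simplicity forces the only proper retract to be $\{1\}$, so one need only show each $g\neq 1$ is a test element, and an endomorphism fixing $g$ has kernel $\neq\Gamma$, hence trivial kernel by simplicity, hence is an automorphism by co-Hopficity. (Your use of co-Hopficity to rule out proper retracts with trivial kernel is harmless but unnecessary, since an injective idempotent endomorphism is already the identity.)
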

\begin{proof}
Let $\Gamma $ be a co-Hopfian simple group. Since $\Gamma $
is simple the only proper retract is the trivial group $\{1\}$. Let $g\in
\Gamma \backslash \{1\}$ and let the endomorphism $\varphi\colon \Gamma
\rightarrow \Gamma $ fix $g$. Then $g\notin Ker(\varphi )$ so $Ker(\varphi
)\neq \Gamma $ and hence $Ker(\varphi )=\{1\}$. Therefore, $\varphi $ is
injective. By co-Hopficity, $\varphi $ is an automorphism. 
\end{proof}

\begin{cor}
$G$ is a Turner group.
\end{cor}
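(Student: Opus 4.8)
The plan is to obtain this as an immediate consequence of the main Theorem of the previous section, that $G$ is co-Hopfian, together with the preceding Proposition of \cite{FGLS}, which asserts that every co-Hopfian simple group is a Turner group. The only point not yet recorded is that $G$ itself is simple, so everything reduces to that observation.

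To see that $G = SL_2(k)$ is simple, the relevant feature of $k = \overline{\mathbb{F}_2}$ is its characteristic. The center of $SL_2(k)$ consists of the scalar matrices $\lambda I$ with $\lambda^2 = 1$, and in characteristic $2$ the equation $\lambda^2 = 1$ forces $\lambda = 1$; hence this center is trivial and $G$ coincides with $PSL_2(k)$. Since $|k| \geq 4$, the group $PSL_2(k)$ is a nonabelian simple group — one may quote the classical simplicity theorem for $PSL_2$ of a field, or argue in the spirit of the earlier sections using that $G$ is the direct union of the subgroups $SL_2(\mathbb{F}_{2^{n!}})$, $n \geq 2$, each of which is simple: a nontrivial normal subgroup $N$ of $G$ contains a nonidentity element, which lies in $SL_2(\mathbb{F}_{2^{n!}})$ for some $n$, so $N \cap SL_2(\mathbb{F}_{2^{m!}})$ is a nontrivial normal subgroup of the simple group $SL_2(\mathbb{F}_{2^{m!}})$, hence all of it, for every $m \geq n$; taking the union over $m$ yields $N = G$.

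With the simplicity of $G$ established, the Corollary follows at once: $G$ is co-Hopfian by the Theorem, $G$ is simple, and therefore $G$ is a Turner group by the cited Proposition. There is no real obstacle here: all of the substance lies in the co-Hopficity theorem, and this Corollary is a one-line deduction from it, the only supplementary fact needed being the (entirely standard) simplicity of $PSL_2$ over an infinite field.
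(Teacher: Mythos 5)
Your proof is correct and follows essentially the same route as the paper: the paper's proof of this corollary is the one-liner that $G = SL_2(k) = PSL_2(k)$ is a co-Hopfian simple group, hence a Turner group by the preceding Proposition. Your added justification of simplicity (trivial center in characteristic $2$, plus the direct-union argument over the simple subgroups $SL_2(\mathbb{F}_{2^{n!}})$, $n \ge 2$) is sound and merely fills in a standard fact the paper takes for granted.
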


\begin{proof}
$G=SL_{2}(k)=PSL_{2}(k)$ is a co-Hopfian simple group.\end{proof}

\begin{prop}[\cite{FGLS}] Let $K$ be a field of characteristic $2$ which is not
co-Hopfian as a ring. Then $SL_{2}(K)$ is not a Turner group.
\end{prop}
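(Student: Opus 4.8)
The plan is to produce a nontrivial element of $SL_{2}(K)$ which is excluded by every proper retract but is not a test element; by the definition of a Turner group this is exactly what it takes to show $SL_{2}(K)$ is not one.

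Since $K$ is not co-Hopfian as a ring, there is an injective ring endomorphism $\tau\colon K\to K$ which is not surjective; note in passing that $K$ is necessarily infinite, as every finite ring is co-Hopfian. Applying $\tau$ to each entry defines a map $\varphi=SL_{2}(\tau)\colon SL_{2}(K)\to SL_{2}(K)$. Because $\tau$ fixes $0$ and $1$ and respects sums and products, $\varphi$ is a group endomorphism, and it is injective because $\tau$ is. Its image is exactly $SL_{2}(\tau(K))$, where $\tau(K)$ is a proper subfield of $K$; choosing any $\alpha\in K\setminus\tau(K)$, the matrix $\begin{bmatrix}1&\alpha\\0&1\end{bmatrix}$ lies in $SL_{2}(K)$ but not in $SL_{2}(\tau(K))$, so $\varphi$ is not surjective and hence is not an automorphism of $SL_{2}(K)$.

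Next I would bring in simplicity. As $K$ is infinite of characteristic $2$, the centre of $SL_{2}(K)$ is trivial, so $SL_{2}(K)=PSL_{2}(K)$, and this group is simple. Hence, exactly as in the proof that co-Hopfian simple groups are Turner groups, the only proper retract of $SL_{2}(K)$ is $\{1\}$, so every nontrivial element is excluded by every proper retract. It remains only to find a nontrivial element fixed by $\varphi$. Since $\tau$ fixes the prime subfield $\mathbb{F}_{2}$ pointwise, $\varphi$ fixes $SL_{2}(\mathbb{F}_{2})$ pointwise; in particular $\varphi$ fixes $g=\begin{bmatrix}1&1\\0&1\end{bmatrix}\neq 1$.

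Assembling the pieces: $g$ is a nontrivial element of the simple group $SL_{2}(K)$, so it lies in no proper retract, yet $\varphi$ is an endomorphism fixing $g$ which is not an automorphism, so $g$ is not a test element. Therefore $SL_{2}(K)$ is not a Turner group. The only step that needs a little attention is verifying that $\varphi$ fails to be surjective — that is, identifying its image as $SL_{2}$ over the proper subfield $\tau(K)$ and exhibiting a matrix outside it — and this is handled by the unipotent example above; the remainder is routine unwinding of the definitions.
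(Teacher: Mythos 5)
Your proposal is correct and follows essentially the same route as the paper: the entrywise endomorphism of $SL_{2}(K)$ induced by a non-surjective field embedding, which fixes $SL_{2}(\mathbb{F}_{2})$ pointwise (in particular the unipotent matrix $\begin{bmatrix}1&1\\0&1\end{bmatrix}$) yet misses $\begin{bmatrix}1&\theta\\0&1\end{bmatrix}$ for $\theta$ outside the image, hence is not an automorphism. The only difference is that you spell out the step the paper leaves implicit, namely that simplicity of $SL_{2}(K)=PSL_{2}(K)$ forces the only proper retract to be trivial, so the fixed element is excluded by every proper retract; this is a welcome clarification rather than a divergence.
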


\begin{proof}
Let $K_{o}$ be a proper subfield of $K$ and let $\varphi
\colon K\rightarrow K_{0}$ be an isomorphism. Then $\varphi $ induces an
endomorphism $\overline{\varphi }\colon SL_{2}(K)\rightarrow SL_{2}(K)$ via%
\begin{equation*}
\overline{\varphi }~%
\begin{bmatrix}
a & b \\ 
c & d%
\end{bmatrix}%
=%
\begin{bmatrix}
\varphi (a) & \varphi (b) \\ 
\varphi (c) & \varphi (d)%
\end{bmatrix}%
\end{equation*}%
$\overline{\varphi }$ fixes every element of $SL_{2}(\mathbb{F}_{2})$ but $%
\overline{\varphi }$ is not an automorphism since, if $\theta \in
K\backslash K_{0}$ then e.g. $%
\begin{bmatrix}
1 & \theta \\ 
0 & 1%
\end{bmatrix}%
$ does not lie in the image of $\overline{\varphi }$.\end{proof}

Now let $t$ be transcendental over $k=\overline{\mathbb{F}_{2}}$ . Let $K$
be the algebraic closure of the transcendental extension $k(t)$ of $k$.
Since the theory of algebraically closed fields of characteristic $2$ is
complete (a consequence of \cite[Chapter 9, Corollary 1.11]{BS}) the fields $%
k $ and $K$ are elementarily equivalent. By the Keisler-Shelah Ultrapower
Theorem \cite{S}; there is a nonempty index set $I$ and an ultrafilter $D$ on $I$
such that the ultrapowers $^{\ast }K=K^{I}/D$ and $^{\ast }k=k^{I}/D$ are
isomorphic. Then $^{\ast }K$ is isomorphic to a proper subfield $^{\ast }k$
and then by Proposition 9, $SL_{2}(^{\ast }K)$ is not a Turner group.

Now we have isomorphisms%
\begin{eqnarray*}
SL_{2}(^{\ast }K) &\cong &SL_{2}(K)^{I}/D \\
&\cong &SL_{2}(k)^{I}/D \\
&=&^{\ast }SL_{2}(k)
\end{eqnarray*}%
and the class of Turner groups is not closed under ultrapowers hence not
closed under elementary equivalence. \

\begin{rem}
The argument also shows that the classes of co-Hopfian rings and groups are
not closed under elementary equivalence.
\end{rem}

\bibliographystyle{plain}
\bibliography{refs}

\begin{thebibliography}{1}

\bibitem{BS}
J.~L. Bell and A.~B. Slomson.
\newblock {\em Models and ultraproducts: {A}n introduction}.
\newblock North-Holland Publishing Co., Amsterdam-London, 1969.

\bibitem{BK}
Elena Bunina and Boris Kunyavskii.
\newblock Sha-rigidity of chevalley groups over local rings, 2023.

\bibitem{D}
John~D. Dixon.
\newblock {\em Problems in group theory}.
\newblock Blaisdell Publishing Co. [Ginn and Co.], Waltham, Mass.-Toronto,
  Ont.-London, 1967.

\bibitem{FGLS}
Benjamin Fine, Anthony Gaglione, Seymour Lipschutz, and Dennis Spellman.
\newblock On {T}urner's theorem and first-order theory.
\newblock {\em Comm. Algebra}, 45(1):29--46, 2017.

\bibitem{H}
Nancy Harrison.
\newblock Real length functions in groups.
\newblock {\em Trans. Amer. Math. Soc.}, 174:77--106, 1972.

\bibitem{MKS}
Wilhelm Magnus, Abraham Karrass, and Donald Solitar.
\newblock {\em Combinatorial group theory}.
\newblock Dover Publications, Inc., Mineola, NY, second edition, 2004.
\newblock Presentations of groups in terms of generators and relations.

\bibitem{OT}
John~C. O'Neill and Edward~C. Turner.
\newblock Test elements and the retract theorem in hyperbolic groups.
\newblock {\em New York J. Math.}, 6:107--117, 2000.

\bibitem{S}
Saharon Shelah.
\newblock Every two elementarily equivalent models have isomorphic ultrapowers.
\newblock {\em Israel J. Math.}, 10:224--233, 1971.

\bibitem{W}
Yu-Fen Wu.
\newblock Groups in which commutativity is a transitive relation.
\newblock {\em J. Algebra}, 207(1):165--181, 1998.

\end{thebibliography}

\end{document}